  \title{{Ordinary Differential Equation Methods
\\ 
For Markov Decision Processes
 \\
 and Application to 
Kullback--Leibler Control Cost}\thanks{Research   supported by
French National Research Agency grant ANR-12-MONU-0019,  and  NSF grants 1609131 and CPS-1259040.}}
\author{Ana Bu\v{s}i\'{c}\thanks{Inria and the Computer Science Department of \'Ecole Normale Sup\'erieure, Paris, France
    ({ana.busic@inria.fr}, \url{http://www.di.ens.fr/\string~busic/}).}
  \and
  Sean Meyn\thanks{Department of Electrical and Computer
Engineering at the University of Florida, Gainesville ({meyn@ece.ufl.edu}, \url{http://www.meyn.ece.ufl.edu/}).} 
}
\newlength{\noteWidth}
\long\def\notes#1{\ifinner
           {\footnotesize #1}
           \else
           \marginpar{\parbox[t]{\noteWidth}{\raggedright\footnotesize #1}}
       \fi\typeout{#1}}
\def\notes#1{\typeout{read notes: #1}}  
\def\spm#1{\notes{SPM:  #1}}
\def\rd#1{{\color{red}#1}}
\def\urls#1{{\small \url{#1}}}
\def\util{{\mathcal{U}}}
\def\meanutil{\mbox{\scriptsize$\widebar{\cal U}$}}
\def\reward{w}
\def\rewardDer{W}
\def\rewardDerOpt{\check{W}}
\def\rewardMean{\mbox{\small$\widebar{W}$}}
\def\rewardMean{\widebar{w}}
\newdimen\rh@wd
\newdimen\rh@hta
\newdimen\rh@htb
\newbox\rh@box
\def\rh@measure#1{\setbox\rh@box=\hbox{$#1$}\rh@wd=\wd\rh@box \rh@hta=\ht\rh@box}
\def\widecheck#1{\rh@measure{#1}%
  \setbox\rh@box=\hbox{$\widehat{\vrule height \rh@hta width\z@ \kern\rh@wd}$}%
  \rh@htb=\ht\rh@box \advance\rh@htb\rh@hta \advance\rh@htb\p@
  \ooalign{$\vrule height \ht\rh@box width\z@ #1$\cr
           \raise\rh@htb\hbox{\scalebox{1}[-1]{\box\rh@box}}\cr}}
\def\rewardDerOpt{\check{W}}
\def\xz{x^\circ}
\def\EFn#1{\Lambda_{#1}}
\def\preclH{{\cal H}^\circ}
\def\preH{H^\circ}  
\def\sq{\hbox{\rlap{$\sqcap$}$\sqcup$}}
\def\qed{\ifmmode\sq\else{\unskip\nobreak\hfil
\penalty50\hskip1em\null\nobreak\hfil\sq
\parfillskip=0pt\finalhyphendemerits=0\endgraf}\fi\medskip}
\long\def\defbox#1{\framebox[.9\hsize][c]{\parbox{.85\hsize}{%
\parindent=0pt
\baselineskip=12pt plus .1pt      
\parskip=6pt plus 1.5pt minus 1pt 
 #1}}}
\long\def\beginbox#1\endbox{\subsection*{}%
\hbox{\hspace{.05\hsize}\defbox{\medskip#1\bigskip}}%
\subsection*{}}
\def\endbox{}
\def\transpose{{\hbox{\it\tiny T}}}
\newsavebox{\junk}
\savebox{\junk}[1.6mm]{\hbox{$|\!|\!|$}}
\def\liminf{\mathop{\rm lim\ inf}}
\def\argmin{\mathop{\rm arg\, min}}
\def\argmax{\mathop{\rm arg\, max}}
\def\A{{\sf A}}
\def\S{{\sf S}}
\def\U{{\sf U}}
\def\state{{\sf X}}
\def\stateu{{\sf X}_{\sf u}}
\def\staten{{\sf X}_{\sf n}}
\def\bx{\clB(\state)}
\newcommand{\field}[1]{\mathbb{#1}}
\def\Re{\field{R}}
\def\One{\mbox{\rm{\bf\large{1}}}} 
\def\One{\field{I}}
\def\ind{\field{I}}
\def\cpi{\check{\pi}} 
\def\cP{{\check{P}}}
\def\cR{{\check{R}}}
\def\bfmath#1{{\mathchoice{\mbox{\boldmath$#1$}}%
{\mbox{\boldmath$#1$}}%
{\mbox{\boldmath$\scriptstyle#1$}}%
{\mbox{\boldmath$\scriptscriptstyle#1$}}}}
\def\bfmN{\bfmath{N}}
\def\bfmU{\bfmath{U}}
\def\bfmX{\bfmath{X}}
\def\bfmY{\bfmath{Y}}
\def\bfmhhaY{\bfmath{\hhaY}} 
\def\bfmhhaY{\hbox to 0pt{$\widehat{\bfmY}$\hss}\widehat{\phantom{\raise 1.25pt\hbox{$\bfmY$}}}}
\def\bfPhi{\bfmath{\Phi}}
\def\til={{\widetilde =}}
\def\clB{{\cal B}}
\def\clF{{\cal F}}
\def\clK{{\cal K}}
\def\clV{{\cal V}}
 \def\FRAC#1#2#3{\genfrac{}{}{}{#1}{#2}{#3}}
\def\ddtp{{\mathchoice{\FRAC{1}{d^{\hbox to 2pt{\rm\tiny +\hss}}}{dt}}%
{\FRAC{1}{d^{\hbox to 2pt{\rm\tiny +\hss}}}{dt}}%
{\FRAC{3}{d^{\hbox to 2pt{\rm\tiny +\hss}}}{dt}}%
{\FRAC{3}{d^{\hbox to 2pt{\rm\tiny +\hss}}}{dt}}}}
\def\ddzeta{{\mathchoice{\FRAC{1}{d}{d\zeta}}%
{\FRAC{1}{d}{d\zeta}}%
{\FRAC{3}{d}{d\zeta}}%
{\FRAC{3}{d}{d\zeta}}}}
\def\half{{\mathchoice{\FRAC{1}{1}{2}}%
{\FRAC{1}{1}{2}}%
{\FRAC{3}{1}{2}}%
{\FRAC{3}{1}{2}}}}
\def\eqdef{\mathbin{:=}}
\def\Prob{{\sf P}}
\def\Expect{{\sf E}}
\def\average#1,#2,{{1\over #2} \sum_{#1}^{#2}}
\def\eye(#1){{\bf(#1)}\quad}
\def\epsy{\varepsilon}
\def\varble{\,\cdot\,}
\newtheorem{theorem}{Theorem}[section]
\newtheorem{proposition}[theorem]{Proposition}
\newtheorem{lemma}[theorem]{Lemma}
\def\Lemma#1{Lemma~\ref{#1}}
\def\Prop#1{Prop.~\ref{#1}}
\def\Theorem#1{Theorem~\ref{#1}}
\def\Section#1{Section~\ref{#1}}
\def\barkappa{{\overline{\kappa}}}
\newcounter{rmnumx}
\newenvironment{romannumx}{\begin{list}{{\upshape (\roman{rmnumx})}}{\usecounter{rmnumx}
\setlength{\leftmargin}{2pt}
\setlength{\rightmargin}{4pt}
\setlength{\itemsep}{3pt}
\setlength{\itemindent}{18pt}
}}{\end{list}}
\newcounter{anum}
\def\Ebox#1#2{%
\begin{center}
\includegraphics[width= #1\hsize]{#2} \end{center}}
\def\Fig#1{Fig.~\ref{#1}}
\def\ind{\field{I}}
\def\Re{\field{R}}
\begin{document}

\maketitle

\begin{abstract} 
A new approach to computation of optimal policies for  MDP (Markov decision process)
models is introduced.  The main idea is to solve not one, but an entire family of MDPs, parameterized by a scalar $\zeta$ that appears in the one-step reward function.   For an MDP with $d$ states,  the family of value functions $\{ h^*_\zeta : \zeta\in\Re\}$ is the solution to an ODE,
$$
\ddzeta h^*_\zeta  =  {\cal V}(h^*_\zeta)
$$
where the vector field ${\cal V}\colon\Re^d\to\Re^d$ has a simple form,  based on a  matrix inverse.  

This general methodology is applied to a family of average-cost optimal control models in which the one-step reward function is defined by Kullback-Leibler divergence. 
The motivation for this reward function in prior work is computation:  The solution to the MDP can be expressed in terms of the Perron-Frobenius eigenvector for an associated positive matrix.   The drawback with this approach is that no hard constraints on the control are permitted.   
It is shown here that it is possible to extend this framework to model randomness from nature that cannot be modified by the controller.   Perron-Frobenius theory is no longer applicable -- the resulting dynamic programming equations appear as complex as a completely unstructured MDP model.  Despite this apparent complexity,  it is shown that this class of MDPs admits a solution via this ODE technique. 
This approach is new and practical even for the simpler problem in which randomness from nature is absent.

\smallbreak

\paragraph*{Keywords:}  Markov decision processes,  Computational methods,  Distributed control.

\paragraph*{AMS Subject Classifications:}
90C40,  
	93E20,  	
	60J22,  	
   93E35,  	
   60J20,  	
90C46	

\end{abstract}


\section{Introduction} 
\label{s:intro} 

This paper concerns average-cost optimal control for Markovian models.  It is assumed that there is a one-step reward   $\reward$ that is a function of state-input pairs.  For a given policy that defines the input as a function of present and past state values,  the resulting average reward is the limit infimum,
\begin{equation}
\eta = \liminf_{T\to\infty} \frac{1}{T} \sum_{t=1}^T \reward(X(t), U(t))
\label{e:eta}
\end{equation}
where $\bfmX=\{X(t): t\ge 0\}$, $\bfmU=\{U(t): t\ge 0\}$ are the state and input sequences.   Under general conditions, the maximum over all policies is deterministic and independent of the initial condition, and the optimal policy is state-feedback --- obtained as the solution to the average-reward optimality equations  (AROE)  \cite{bershr96a,put14}.

\subsection{Background}

In this paper the state space $\state$ on which $\bfmX$ evolves is taken to be finite, but possibly large.  It is well known that computation of a solution to the AROE may be difficult in such cases.  This is one motivation for the introduction of approximation techniques such as reinforcement learning  \cite{bertsi96a,sutbar98}.
\spm{q-learning isn't really approximate, so I removed}

An interesting alternative is to change the problem so that it is easily solved.  In all 
of the prior work surveyed here, the setting is average-cost optimal control,  so that the reward function is replaced by a cost function.    

Brockett in \cite{bro08} introduces a class of controlled Markov models in continuous time.  The model and cost function are formulated so that the optimal control problem is  easily solved numerically.  The ODE technique is applied to this special class of MDPs  in \Section{s:Brockett}.

\spm{I just learned of this guy,
Krishnamurthy Dvijotham
\\
Postdoctoral Fellow, Center for Mathematics of Information, California Institute of Technology
\\
optimal control, convex optimization, power systems, smart grids
\\
I must have met him before.  He will be at IMA in May!  }

The theory developed in \Section{s:design} was inspired by the work of  Todorov~\cite{tod07},  the similar earlier work of K\'arn\'y~\cite{kar96},  and the more recent work \cite{guaragwil14,meybarbusyueehr15}.   The state space $\state$ is finite,  the action space 
$\U$ consists of probability mass functions on $\state$, and the controlled transition matrix is entirely determined by the input as follows:
\begin{equation}
\Prob\{X(t+1)=x'\mid X(t) =x, U(t) = \mu\} = \mu(x')\,,\qquad x,x\in\state,\ \mu\in\U\, .
\label{e:TodMDPu}
\end{equation}
The MDP has a simple solution only under special conditions on the cost function.  It is assumed
in \cite{tod07,guaragwil14,meybarbusyueehr15} that it is the sum two terms:  The first is a cost function on $\state$, which is completely unstructured.   The second term is a ``control cost'',  defined using   Kullback--Leibler  (K-L) divergence (also known as \textit{relative entropy}).   


The control cost is based on deviation from control-free behavior
(modeled by a nominal transition matrix $P_0$).  In most applications, $P_0$ captures randomness from nature.  For example, in a queueing model  there is uncertainty in inter-arrival times or service times.  An optimal solution in this framework would allow modification of arrival statistics and service statistics,  which may be entirely infeasible.  In this paper the K-L cost framework is broadened to include constraints on the pmf $\mu$ 
appearing in \eqref{e:TodMDPu}.

 \notes{
\rd{It might be fun to reference recent work on randomized control architectures:
\\
 Wells, D.K., Kath, W.L., and Motter, A.E. (2015). Control of stochastic and induced switching in biophysical complex networks. Phys. Rev. X, 5, 031036.    Basis of this survey:
 \\
\urls{https://sinews.siam.org/DetailsPage/tabid/607/ArticleID/713/Leveraging-Noise-to-Control-Complex-Networks.aspx}
 \\
Leveraging Noise to Control Complex Networks, 
\textit{SIAM News}.  Daniel K. Wells, William L. Kath, Adilson E. Motter.  January 19, 2016.  
\\
postscript, March 2016:  the paper looks like fluff to me now:(
 }
 }
 
\subsection{Contributions}

The new approach to computation proposed in this paper is based on the solution of an entire family of MDP problems.     \Section{s:ode} begins with a general MDP model in which the one-step reward function is a smooth function of a real parameter $\zeta\in\Re$.   

For each $\zeta$, the solution to the average-reward optimization problem is based on a relative value function $h^*_\zeta\colon\state\to\Re$.  Under general conditions it is shown that these functions are obtained as the solution to an ordinary differential equation
\[
\ddzeta h^*_\zeta  =  \clV(h^*_\zeta)
\]
Consequently,   \textit{the solution to an entire family of MDPs can be obtained through the solution of a single ordinary differential equation {\rm (ODE)}.}   

Following the presentation of these general results, the paper focuses on the  class of MDPs with transition dynamics given in \eqref{e:TodMDPu}:   the input space is a subset of the simplex in $\Re^d$, and the cost function $c$ is defined with respect to K-L divergence (see \eqref{e:DVrate} and surrounding discussion).   The optimal control formulation is far more general than in the aforementioned work \cite{tod07,guaragwil14,meybarbusyueehr15}, as it allows for inclusion of exogenous randomness in the MDP model.


The dynamic programming equations become significantly  more complex in this generality, so that in particular, the Perron-Frobenious computational approach used in  prior work is no longer applicable.   
Nevertheless, the ODE approach can be applied to solve the family of MDP optimal control problems.
The vector field $\clV\colon\Re^d\to\Re^d$ has special structure that further simplifies computation of the relative value functions.


 Simultaneous computation of the optimal policies is essential in applications to ``demand dispatch'' for providing virtual energy storage from a large collection of flexible loads \cite{barbusmey14,meybarbusyueehr15,busmey14}.   In these papers, randomized policies are designed for each of many thousands of electric loads in a distributed control architecture.  In this context it is necessary to compute the optimal transition matrix $\cP_\zeta$ for each $\zeta$.  
Prior to the present work it was not possible to include any  exogenous uncertainty in the load model.   

In the companion paper  \cite{busmey16v},   the results of the present paper are applied to distributed control of flexible loads, including thermostatically controlled devices such as refrigerators and heating systems.  This paper also contains extensions of the ODE method to generate transition matrices with desirable properties, without consideration of optimality.   

\notes{Let's leave this out of the initial submission -- might piss off a mathematician! This research is a foundation for the recent patent application \cite{busmeyPatent15}.}

 \medskip
 
The remainder of the paper is organized as follows.   \Section{s:ode}
sets the notation for the MDP models in a general setting, and presents
an ODE approach to solving the AROE under minimal conditions on the model.
\Section{s:design} describes the Kullback--Leibler cost criterion.  Special structure of optimal policies obtained in \Theorem{t:IPD} leads to a simple representation of the ODE in
\Theorem{t:IDPODE}.  Conclusions and topics for future research are contained in \Section{s:conc}.

\section{ODE for MDPs} 
\label{s:ode}

\subsection{MDP model}

Consider an MDP with finite state space $\state=\{x^1,\dots, x^d\}$;  the action space $\U$  is an open subset of   $\Re^m$.  The state process is denoted $\bfmX = (X(0),X(1), \dots )$,
 and the input process $\bfmU = (U(0), U(1),\dots)$.  The dynamics of the model are defined by a 
 \textit{controlled transition matrix}:    for  $x,x'\in\state$, and $u\in\U$, this is defined by
\[
 P_u(x,x') = \Prob\{ X(t+1)  =x' \mid X(t)=x,\ U(t)=u \} 
\] 
where the right hand side is assumed independent of $t=0,1,2,\dots$.

The one-step reward function is parameterized by a scalar $\zeta\in\Re$.  It is assumed to be continuously differentiable in this parameter, with derivative denoted
\begin{equation}
\rewardDer_\zeta(x,u) = \ddzeta \reward_\zeta(x, u)\, .
\label{e:WelfareDerivative}
\end{equation} 
Unless there is risk of confusion,  dependency on $\zeta$ will be suppressed;  in particular, we write $\reward$ rather than $\reward_\zeta$.

There may be hard constraints:  For each $x\in\state$, there is an open set $\U(x)\subset\U$ consisting of feasible inputs $U(t)$ when $X(t)=x$.

The optimal reward $\eta^*$ is defined to be the maximum of $\eta$ in \eqref{e:eta} over all policies. 
Under general conditions on the model,  $\eta^*$ is deterministic, and is independent of $x$. Under further conditions, this value and the optimal policy are characterized by
the AROE:
\begin{equation}
\max_{u\in\U(x)}\Bigl\{ \reward(x, u) +\sum_{x'} P_u(x,x') h^*(x') \Bigr\} = h^*(x) + \eta^*
\label{e:AROEgen}
\end{equation}   
in which the function $h^*\colon\state\to\Re$ is called the \textit{relative value function}. 
The stationary policy $\phi^*$ is obtained from the AROE:
  $\phi^*(x)\in\U$ is a maximizing value of $u$ in \eqref{e:AROEgen} for each $x$ \cite{bershr96a,put14}.

Structure for the optimal average reward is obtained under minimal assumptions:
\begin{proposition}
\label{t:etaConvex}
Suppose that the following hold:
\begin{romannumx}
\item 
The welfare function is affine in its parameter:  $\reward_\zeta(x,u) = \reward_0(x,u)  + \zeta \rewardDer(x,u)$ for some function $\rewardDer$ and  all $x,u$.  

\item
For each $\zeta$, the optimal reward $\eta^*_\zeta$ exists, is deterministic, and is  independent of the initial condition. 

\item
For each $\zeta$, the optimal reward  $\eta^*_\zeta$  is achieved with a stationary policy $\phi_\zeta^*$,  and  under this policy, the following ergodic limits exist for each initial condition:
\[
\eta^*_\zeta =   \lim_{T\to\infty} \frac{1}{T} \sum_{t=1}^T \reward_\zeta(X(t), U(t))\,  ,
\qquad
\rewardMean_\zeta =   \lim_{T\to\infty} \frac{1}{T} \sum_{t=1}^T \rewardDer(X(t), U(t))
\]
\end{romannumx}
Then, $\eta^*_\zeta$ is convex as a function of $\zeta$,  with sub-derivative $\rewardMean_\zeta$:  
\[
\eta^*_\zeta \ge \eta^*_{\zeta_0}  + (\zeta-\zeta_0)   \rewardMean_{\zeta_0},\qquad  
\text{for all} \ \zeta,\zeta_0\in\Re\, .
\]
\end{proposition}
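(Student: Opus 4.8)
The plan is to exploit the affine dependence on $\zeta$ in assumption (i): although $\eta^*_\zeta$ is defined through an optimization over all policies, the performance of each fixed policy is an affine function of $\zeta$, so the optimal value is a supremum of affine functions and hence automatically convex. The supporting line at any $\zeta_0$ will be supplied by the optimal policy $\phi^*_{\zeta_0}$ itself, reducing the whole argument to a single suboptimal-policy comparison.

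First I would fix $\zeta_0$ and run the optimal stationary policy $\phi^*_{\zeta_0}$ --- which does not depend on $\zeta$ --- in the MDP equipped with the $\zeta$-reward. Using the decomposition $\reward_\zeta = \reward_{\zeta_0} + (\zeta-\zeta_0)\rewardDer$ from (i), the running average of $\reward_\zeta$ along a sample path generated by $\phi^*_{\zeta_0}$ splits into two running averages. By assumption (iii), under this fixed policy the average of $\reward_{\zeta_0}$ converges to $\eta^*_{\zeta_0}$ and the average of $\rewardDer$ converges to $\rewardMean_{\zeta_0}$, for every initial condition. Hence the average reward collected by $\phi^*_{\zeta_0}$ in the $\zeta$-problem equals $\eta^*_{\zeta_0} + (\zeta-\zeta_0)\rewardMean_{\zeta_0}$.

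Since $\phi^*_{\zeta_0}$ is only one admissible policy for the $\zeta$-problem, its average reward cannot exceed the optimal value, giving
\[
\eta^*_\zeta \ge \eta^*_{\zeta_0} + (\zeta-\zeta_0)\rewardMean_{\zeta_0},\qquad \text{for all }\zeta,\zeta_0\in\Re,
\]
which is the claimed subgradient inequality. Convexity then follows at no extra cost: writing $a_{\zeta_0}(\zeta) = \eta^*_{\zeta_0} + (\zeta-\zeta_0)\rewardMean_{\zeta_0}$, the inequality says $\eta^*_\zeta \ge a_{\zeta_0}(\zeta)$ for every $\zeta_0$, while taking $\zeta_0=\zeta$ gives equality; hence $\eta^*_\zeta = \sup_{\zeta_0} a_{\zeta_0}(\zeta)$ is a pointwise supremum of affine functions of $\zeta$ and is therefore convex.

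The one step that needs care --- and the main obstacle --- is the splitting of the ergodic limit in the second paragraph. I must verify that the average of $\reward_\zeta$ under the fixed policy is genuinely the sum of the two separate limits, rather than a $\liminf$ that could be strictly smaller than that sum. This is legitimate precisely because the policy $\phi^*_{\zeta_0}$, and thus the sample path $\{(X(t),U(t))\}$, is held fixed as $\zeta$ varies, and assumption (iii) guarantees that both individual limits exist; the sum of two convergent limits is the limit of the sum, so the $\liminf$ in \eqref{e:eta} coincides with it. It also remains to note that this value is independent of the initial condition, so that it is a valid lower bound for $\eta^*_\zeta$ --- which is exactly what the ``for each initial condition'' clause in (iii) provides.
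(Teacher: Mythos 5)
Your proof is correct and is essentially the paper's own argument: run the fixed policy $\phi^*_{\zeta_0}$ in the $\zeta$-problem, split the running average via the affine decomposition $\reward_\zeta = \reward_{\zeta_0} + (\zeta-\zeta_0)\rewardDer$, invoke the ergodic limits of assumption (iii), and conclude the supporting-line inequality, from which convexity follows as a supremum of affine functions. Your explicit treatment of the $\liminf$-versus-limit issue and of the pointwise-supremum step makes precise two points the paper passes over quickly, but the route is the same.
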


\begin{proof}
Convexity of $\eta^*_\zeta$  will follow from the lower bound.  Alternatively, convexity is  implied by the linear programming representation of average-cost optimal control, where $\eta^*_\zeta$ is
 defined as the maximum of linear functions of $\zeta$ \cite{man60a,bor02a}.
 
  To obtain the lower bound, choose any $\zeta,\zeta_0\in\Re$,  and consider the average reward based on $\reward_\zeta$, obtained using $U(t) = \phi^*_{\zeta_0}(X(t))$ for all $t\ge 0$.  We then have,
\[
\begin{aligned}
\eta^*_\zeta &\ge  \liminf_{T\to\infty} \frac{1}{T} \sum_{t=1}^T \reward_\zeta(X(t), U(t))
\\
&=\lim_{T\to\infty} \frac{1}{T} \sum_{t=1}^T \reward_{\zeta_0}(X(t), U(t))
 + \lim_{T\to\infty} \frac{1}{T} \sum_{t=1}^T\bigl(  \reward_\zeta(X(t), U(t))- \reward_{\zeta_0}(X(t), U(t))\bigr)
\end{aligned}
\]
The first summation on the right hand side is equal to $\eta^*_{\zeta_0}  $.  The second reduces to 
$ (\zeta-\zeta_0)   \rewardMean_{\zeta_0}$ on  substituting
$ \reward_\zeta - \reward_{\zeta_0} =(\zeta-\zeta_0) \rewardDer$.
\end{proof}

We next introduce an ODE that solves the AROE for each $\zeta$.

\subsection{ODE solution}

To construct an ordinary differential equation for $h^*_\zeta$ requires several assumptions.   The first is a normalization:  
The relative value function is  not unique, since we can add a constant to obtain a new solution. 
We resolve this by fixing a state $\xz\in\state$, and assume that $h^*_\zeta(\xz)=0$ for each $\zeta$.

For any  function $h\colon\state\to \Re$, we define a new function on $\state\times\U$ via,
\[
P_u h\, (x) = \sum_{x'} P_u(x,x') h(x') 
\]
Similar notation is used for an uncontrolled transition matrix.

\smallbreak

\noindent
\textbf{Assumptions}
\nobreak
\begin{romannumx}

\item For each $\zeta$,  a solution to the AROE   $(h_\zeta^*,\eta_\zeta^*)$ exists, with $h_\zeta(\xz)=0$, and the pair is continuously differentiable in $\zeta$.   Moreover, the function of $(x,u,\zeta)$ defined by,
\[
q_\zeta^* (x,u) = \reward_\zeta(x,u) +   P_u h^*_\zeta\, (x)
\]
is jointly continuously differentiable in $(\zeta,u)$, with the representation
\begin{equation}
\ddzeta q_\zeta^*(x,u) = \rewardDer_\zeta(x,u) + P_u H^*_\zeta\, (x)
\quad
\text{\it in which }
\quad
H^*_\zeta(x) =\ddzeta h^*_\zeta\, (x)\,.
\label{e:qDer}
\end{equation}

\item  The  stationary policy exists as the minimum
\[
\phi^*_\zeta(x)=\argmin_{u\in\U(x)} 
q_\zeta^* (x,u) \,,\qquad x\in\state,
\]
  and is continuously differentiable in $\zeta$ for each $x$.

\item The optimal transition matrix $\cP_\zeta$ is irreducible, with unique invariant pmf denoted $\pi_\zeta$, 
where  
\[
\cP_\zeta(x,x') = P_{u^*}(x,x'),\qquad u^* = \phi^*_\zeta(x),\ x,x'\in\state \, .
\]
\end{romannumx}
All of these assumptions hold for the class of MDP models considered in \Section{s:design}. 

These assumptions imply that for each $\zeta$ there is a solution $H_\zeta$ to \textit{Poisson's equation}, 
\begin{equation}
   \rewardDerOpt_\zeta+ \cP_\zeta H_\zeta  = H_\zeta +  \rewardMean_\zeta  
\label{e:PoissonGen}
\end{equation}
in which  $\rewardDerOpt_\zeta(x)=\rewardDer_\zeta(x, \phi^*_\zeta(x))$, and
 $\rewardMean_\zeta = \sum_x \pi_\zeta(x) \rewardDerOpt_\zeta(x)$.   It is assumed throughout that the solution is normalized, with  $H_\zeta(\xz)=0$;   there is a    unique solution to  \eqref{e:PoissonGen} with this normalization \cite[Thm.~17.7.2]{MT}.

The function $q_\zeta^*$ is the ``Q-function'' that appears in Q-learning
\cite{bertsi96a}.
Under (i) and (ii), it follows from the AROE that  for each $x$ and $\zeta$,
\begin{equation}
q_\zeta^* (x,\phi^*_\zeta(x))
=
\min_{u\in\U(x)} \bigl\{  \reward_\zeta(x, u) +   P_u h^*_\zeta\, (x) \bigr\}
  =
 h^*_\zeta(x) + \eta^*_\zeta
\label{e:AROE-Poisson}
\end{equation}
 
 \subsection*{MDP vector field} 
In \Theorem{t:ODEforMDP} it is shown that the family of relative value functions solves an ODE.  A function $h\colon\state\to\Re$ is regarded as a vector in $\Re^d$.  The vector field $\clV$ is not homogeneous, so it is regarded as a mapping  $\clV\colon\Re^{d+1}\to\Re^d$.  
For  a given   function $h\colon\state\to\Re$ and $\zeta\in\Re$,  the function $\clV(h,\zeta)$ is defined through the following steps:  
\begin{romannumx}
\item[1.] Obtain a policy:
$
\phi(x) = \argmax_u\{ \reward_\zeta(x,u) +P_uh\, (x) \}
$. 
\item[2.]  Obtain a transition matrix $\cP(x,x') = P_{\phi(x)} (x,x')$,  \ \ $x,x'\in\state$.
\item[3.]  Obtain the solution to Poisson's equation,  $\rewardDerOpt + \cP H = H +\rewardMean$, in which $\rewardDerOpt(x) = \rewardDer_\zeta(x,\phi(x))$, $x\in\state$,  and  $\rewardMean$ is the steady-state mean of $\rewardDerOpt$ under this policy.  The solution is normalized so that $H(\xz)=0$.
\item[4.]  Set $\clV(h,\zeta)=H$.  
\end{romannumx}

\begin{theorem}
\label{t:ODEforMDP}
Under the assumptions of this section:
\begin{romannumx}
\item
The family of relative value functions $\{h^*_\zeta \}$ is a solution to the ordinary differential equation,
\begin{equation}
\ddzeta h^*_\zeta = \clV(h^*_\zeta,\zeta)
\label{e:ODEmain}
\end{equation}

\item  $\ddzeta  \eta^*_\zeta  =\rewardMean_\zeta$,
with 
\[
\rewardMean_\zeta = \sum_x
\pi_\zeta(x) \rewardDer_\zeta(x,\phi_\zeta^*(x))\, ,
\qquad
\text{
where $\pi_\zeta$ is the invariant pmf for $\cP_\zeta$.  
}
\]

\item
If the derivative $\rewardDer_\zeta(x,u)$ is independent of $\zeta$ and $u$ for each $x$, 
 then the ODE is homogeneous.  That is, for each $h$, the function $\clV(h,\zeta)$ does not depend on $\zeta $. 
\end{romannumx}
 \end{theorem}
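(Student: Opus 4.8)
The plan is to prove (i) and (ii) together by differentiating the fixed-point identity \eqref{e:AROE-Poisson} in $\zeta$ and invoking an envelope argument, and then to obtain (iii) by inspecting the four-step construction of the vector field. Throughout I write $H^*_\zeta = \ddzeta h^*_\zeta$; since the normalization $h^*_\zeta(\xz)=0$ holds for every $\zeta$, differentiating gives $H^*_\zeta(\xz)=0$, which will pin down the normalized Poisson solution at the end.

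For (i) and (ii), fix $x\in\state$ and regard \eqref{e:AROE-Poisson}, namely $q_\zeta^*(x,\phi^*_\zeta(x)) = h^*_\zeta(x)+\eta^*_\zeta$, as an identity in $\zeta$, and differentiate both sides. On the left both the function $q_\zeta^*$ and its argument $\phi^*_\zeta(x)$ vary with $\zeta$, so the chain rule produces two terms: the $\zeta$-derivative of $q_\zeta^*$ with $u$ held fixed, evaluated at $u=\phi^*_\zeta(x)$, plus $\nabla_u q_\zeta^*(x,\phi^*_\zeta(x))$ contracted with $\ddzeta\phi^*_\zeta(x)$. Because $\phi^*_\zeta(x)$ is an interior optimizer of $q_\zeta^*(x,\cdot)$ over the open set $\U(x)$ (Assumption (ii)), the first-order condition gives $\nabla_u q_\zeta^*(x,\phi^*_\zeta(x))=0$, so the policy-variation term vanishes; this is the envelope step. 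The surviving term is supplied by \eqref{e:qDer} and equals $\rewardDerOpt_\zeta(x)+\cP_\zeta H^*_\zeta\,(x)$, using $\rewardDerOpt_\zeta(x)=\rewardDer_\zeta(x,\phi^*_\zeta(x))$ and $\cP_\zeta(x,x')=P_{\phi^*_\zeta(x)}(x,x')$. Differentiating the right side gives $H^*_\zeta(x)+\ddzeta\eta^*_\zeta$.

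Equating the two sides yields, for every $x$,
\[
\rewardDerOpt_\zeta + \cP_\zeta H^*_\zeta = H^*_\zeta + \ddzeta\eta^*_\zeta .
\]
Left-multiplying by the invariant pmf $\pi_\zeta$ and using $\pi_\zeta\cP_\zeta=\pi_\zeta$ cancels the $H^*_\zeta$ terms and leaves $\ddzeta\eta^*_\zeta = \sum_x\pi_\zeta(x)\rewardDerOpt_\zeta(x) = \rewardMean_\zeta$, which is exactly statement (ii) and matches the sub-derivative found in \Prop{t:etaConvex}. Substituting $\rewardMean_\zeta$ back, the displayed identity becomes Poisson's equation \eqref{e:PoissonGen}. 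Since $\cP_\zeta$ is irreducible (Assumption (iii)) and $H^*_\zeta(\xz)=0$, the normalized solution is unique \cite[Thm.~17.7.2]{MT}; as Step~1 returns $\phi^*_\zeta$ when $h=h^*_\zeta$, the construction of Steps~1--4 applied to $(h^*_\zeta,\zeta)$ produces precisely this solution. Hence $\clV(h^*_\zeta,\zeta)=H^*_\zeta=\ddzeta h^*_\zeta$, which is \eqref{e:ODEmain} and proves (i).

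For (iii), suppose $\rewardDer_\zeta(x,u)=\rewardDer(x)$ depends on neither $\zeta$ nor $u$. Integrating in $\zeta$ gives $\reward_\zeta(x,u)=\reward_0(x,u)+\zeta\,\rewardDer(x)$, so in Step~1 the objective is $\reward_0(x,u)+P_uh\,(x)+\zeta\,\rewardDer(x)$; the $\zeta$-term is constant in $u$ and so does not affect the maximizer. Thus the policy $\phi$ from Step~1, and hence $\cP$ from Step~2, are independent of $\zeta$; and in Step~3 the forcing $\rewardDerOpt(x)=\rewardDer(x)$ is likewise $\zeta$-free, so Poisson's equation and its normalized solution $H$ in Step~4 carry no dependence on $\zeta$. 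Therefore $\clV(h,\zeta)$ does not depend on $\zeta$ and the ODE is homogeneous. The main obstacle is the envelope step in part (i): justifying that the variation of the optimizer contributes nothing. This rests entirely on $\phi^*_\zeta(x)$ being an interior stationary point of $q_\zeta^*(x,\cdot)$, which is what the openness of $\U(x)$ together with the $C^1$ differentiability in Assumptions (i)--(ii) guarantee; the remaining ingredients are the standard solvability and uniqueness theory for Poisson's equation under an irreducible chain.
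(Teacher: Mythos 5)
Your proposal is correct and follows essentially the same route as the paper's proof: differentiate the identity \eqref{e:AROE-Poisson} in $\zeta$, kill the policy-variation term via the first-order condition on the interior optimizer (the envelope step), read off Poisson's equation, and average against $\pi_\zeta$ to obtain (ii), with (iii) by inspecting Steps~1--3 of the vector-field construction. If anything, you make explicit a detail the paper leaves implicit --- that $H^*_\zeta(\xz)=0$ together with uniqueness of the normalized Poisson solution identifies $H^*_\zeta$ with the output of Steps~1--4, so that $\clV(h^*_\zeta,\zeta)=H^*_\zeta$ --- which is a welcome clarification rather than a deviation.
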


\begin{proof}
The domain of $\clV$ is defined to be any $h$ for which the solution to (1)--(3) is possible.  The domain may not include all functions $h$,  but it is defined for any of the relative value functions $\{h^*_\zeta\}$;  this is true by the assumptions imposed in the theorem.

If $\rewardDer_\zeta$ is independent of $\zeta$ and $u$,
 then  $\reward_\zeta(x,u) = \reward_0(x,u) +\zeta \rewardDer(x)$ for each $x,u,\zeta$.   It follows that $\phi$ is  independent of $\zeta$ in step~1,  and $\rewardDerOpt$ is independent of $\zeta$
in step~3.  Hence the vector field is independent of $\zeta$.

To complete the proof it remains to establish (i),  which will lead to the representation for  $\ddzeta  \eta^*_\zeta $ in part~(ii) of the theorem.  

The assumption that $\U$ is open and that $q_\zeta^*$ is continuously differentiable   is used to apply the first-order condition for optimality of $\phi_\zeta^*(x)$:   
\[
0 = 
\frac{\partial}{\partial u}
q_\zeta^* (x,u) \Big|_{u = \phi^*_\zeta(x)}
\]
On differentiating each side of the AROE in the form \eqref{e:AROE-Poisson},
we obtain from the chain-rule
\[
\begin{aligned}
\ddzeta \Bigl\{  h^*_\zeta(x) + \eta^*_\zeta\Bigr\} 
	& =\ddzeta  \Bigl\{ q_\zeta^* (x,  \phi^*_\zeta(x)) \Bigr\}
\\
	&=
 \frac{\partial}{\partial\zeta} q_\zeta^* \, (x, u)
  \Big|_{u = \phi^*_\zeta(x)}
			+ 
  \Bigl(\frac{\partial}{\partial u}
q_\zeta^* (x,u) \Big|_{u = \phi^*_\zeta(x)} \Bigr) \frac{\partial}{\partial\zeta}  \phi^*_\zeta(x)
\\
	&=
 \frac{\partial}{\partial\zeta} q_\zeta^* \, (x, u)
  \Big|_{u = \phi^*_\zeta(x)} 
  \\
	&=
   \rewardDerOpt_\zeta(x) + P_u H^*_\zeta\, (x)    \Big|_{u = \phi^*_\zeta(x)} 
\end{aligned}
\] 
where in the last equation we have applied \eqref{e:qDer}.  Rearranging terms leads to the fixed point equation
\[
   \rewardDerOpt_\zeta + \cP_\zeta H_\zeta^*  = H_\zeta^* +  \ddzeta  \eta^*_\zeta
\]
Taking the mean of each side with respect to $\pi_\zeta$   implies that $\ddzeta  \eta^*_\zeta  =\rewardMean_\zeta$.  This establishes (ii), and completes the proof that  \eqref{e:ODEmain} holds.
\end{proof}

\subsection*{Extensions}

An ODE can be constructed for the discounted cost problem with discounted factor $\beta\in (0,1)$.  The DROE (discounted-reward optimality equation) is another fixed point equation, similar to \eqref{e:AROEgen}:
\[
\max_{u\in\U(x)}\Bigl\{ \reward(x, u) + \beta \sum_{x'} P_u(x,x') h^*(x') \Bigr\} = h^*(x) 
\]
Step~3 in the construction of the vector field for the ODE is modified as follows: 
\textit{Obtain the solution to  $\rewardDerOpt + \beta\cP H = H $.}  The solution is unique, so no normalization is possible (or needed).
 
For both average- and discounted-reward settings,  an ODE can be constructed when $\U$ is a \textit{finite set} rather than an open subset of $\Re^m$.  In this case, under general conditions, the vector field $\clV$ is continuous and piecewise smooth, and
the optimal policy is piecewise constant as a function of $\zeta$.

\medskip

We next consider two simple examples to illustrate the conclusions in the average cost setting.  The ODE is homogeneous in all of the examples that follow.

\subsection{Example 1:  Linear-quadratic model}
 
Consider first the simple scalar linear system,
\[
X(t+1) = \alpha X(t) + U(t) + N(t+1)
\]
in which $0<\alpha<1$.   The disturbance $\bfmN$ is i.i.d.\ with zero mean, and finite variance $\sigma^2_N$.  
The state space and action space are the real line, $\state=\U=\Re$.  The reward function is taken to be quadratic in the state variable,  $\reward(x,u) = -\zeta x^2 - c(u)$,  so that $\rewardDer(x,u) = -x^2$ is independent of both $\zeta$ and $u$.
The cost function  $c\colon\Re\to\Re$ is continuously differentiable and convex, its derivative $  c'$ is globally Lipschitz continuous, and $c(0) = c'(0)=0$.   

It is assumed in this example that $\zeta\ge 0$.  It can be shown that the relative value function $h^*_\zeta$ is a concave function of $x$ under these assumptions;  it is normalized so that $h^*_\zeta(0)=0$ for each $\zeta\ge 0$  (that is,   $\xz=0$).
\spm{I can prove concavity!   The proof isn't difficult, but a distraction.  Should we remove the claim if I can't find a reference?  Or I could say the proof is similar to \cite{huachemehmeysur11}}

The ODE can be developed even in this infinite state-space setting.

Notation is simplified if this is converted to an average-cost optimization problem, with one-step cost  function $c_\zeta(x,u) = \zeta x^2 + c(u)$.   We let $g_\zeta^*=-h_\zeta^*$, which is a convex  function on $\Re$.  The AROE becomes the average-cost optimality equation,
 \begin{equation}
\min_u\bigl\{ c_\zeta(x, u) +  P_ug^*_\zeta(x) \bigr\} = g^*_\zeta(x) + \gamma^*_\zeta
\label{e:ACOEgen}
\end{equation}   
with $\gamma^*_\zeta=-\eta^*_\zeta$.  The optimal policy is the minimizer,
\[
\begin{aligned}
\phi^*_\zeta(x) & =
\argmin_u\bigl\{ c(u) + P_ug^*_\zeta(x) \bigr\}  
\\
&=
\argmin_u\bigl\{ c(u) + \Expect[g^*_\zeta(\alpha x +u+N_1)] \bigr\}  
\end{aligned}
\]

The ODE is modified as follows.  Let $\clK$ denote the set of non-negative convex functions $g\colon\Re\to\Re$, and construct the vector field so that  $\clV\colon\clK\to\clK$.   For given $g\in\clK$,  we must define $\clV(g)$.  Since we are minimizing cost,  step~1 in the construction of the vector field becomes,

Obtain a policy:
$\displaystyle
\phi(x) = \argmin_u\bigl\{ c(u) + \Expect[g(\alpha x +u+N_1)] \bigr\}
$.
\\
This is a convex optimization problem whose solution can be obtained numerically. 

Step~2  is obtained as follows:
\[
\cP(x,A) = \Prob\{\alpha x +\phi(x)+N_1\in A\} ,\qquad x\in\state,\ A\in\bx.
\]
 The solution to Poisson's equation in step~3 of the ODE construction is more challenging.  This might be approximated using a basis, as in TD-learning \cite{bertsi96a,sutbar98}.

\medskip

 \begin{figure}[h]
\Ebox{.75}{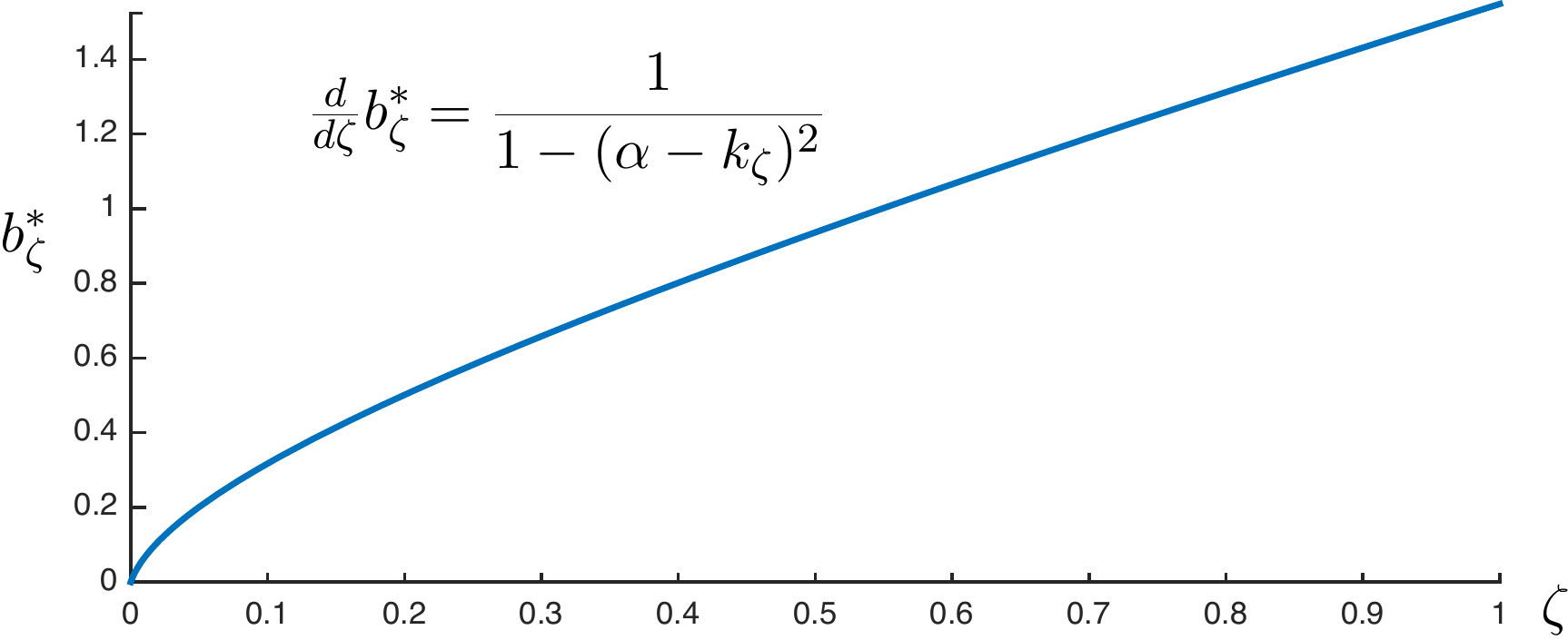} 
\vspace{-2.5ex}
\caption{Solution to the ODE \eqref{e:ODElinear}
 for the linear-quadratic example.}
\label{f:LQRexample}
\vspace{-1.25ex}
\end{figure} 

If the control cost is quadratic, $c(u)=u^2$, then the relative value function is also quadratic,
so that $g^*_\zeta(x) =-h^*_\zeta(x) = bx^2$, with $b\ge 0$,  and $b=0$ only if $\zeta=0$.   The optimal policy is linear, $u=-kx$ for some gain $k$.   
The vector field in the ODE can be restricted to functions of this form:  For any $b\ge 0$,
\begin{romannumx}
\item Obtain a policy 
\[
\begin{aligned}
\phi(x) 
	= \argmin_u\bigl\{ u^2 + b\Expect[(\alpha x +u+N_1)^2] \bigr\}
=\argmin_u\bigl\{ u^2 + b [(\alpha x +u)^2 + \sigma_N^2] \bigr\}
\end{aligned}
\]
This gives $u=-kx$,  with 
$k=b\alpha/(1+b)$.

\item  With a linear policy we obtain,
$\cP(x,A) =\Prob\{ (\alpha - k) x+ N_1\in A\}$.
\item  Obtain the solution to Poisson's equation,  $\rewardDer + \cP H = H +\rewardMean$, in which $\rewardMean$ is the steady-state mean of $
\rewardDer$ under this policy.   Since $\rewardDer(x,u) =-x^2$ is quadratic, it follows that $H$
is also quadratic,  $H(x) = -B x^2$, with
\[
-B=\frac{1}{1-(\alpha-k)^2}\,, \quad \text{and $k$ given in (i)}
\]
\item  Set $\clV(g)=-H$. 
\\ That is,  $\ddzeta g^*_\zeta (x) =  B x^2$, $x\in\Re$, $\zeta\ge 0$.
\end{romannumx}
The ODE reduces to a differential equation for the coefficient $b=b^*_\zeta$: 
\[
\ddzeta b^*_\zeta = \frac{1}{1-(\alpha-k_\zeta)^2},\qquad k_\zeta=b^*_\zeta\alpha/(1+b^*_\zeta)
\]
On substitution this simplifies to,
\begin{equation}
\ddzeta b^*_\zeta = \frac{1}{1-(\alpha/(1+b^*_\zeta))^2} 
\label{e:ODElinear}
\end{equation}
with boundary condition ${b^*_\zeta\mid_{\zeta=0}} =0$.
\Fig{f:LQRexample} shows the solution to this ODE for $\zeta\in[0,1]$ with $\alpha=0.95$.

\subsection{Example 2:  Brockett's MDP}
\label{s:Brockett}

\def\generate{{\cal A}}
\def\cgenerate{\check{\cal A}}

\def\Agenerate{{A}}
\def\Acgenerate{\check{A}}

The theory in this paper is restricted to models in discrete time, but the main results carry  over to the continuous time setting.  An illustration is provided using a class of MDP models introduced in  \cite{bro08}.

As assumed in the present paper, the state space $\state=\{x^1,\dots, x^d\}$ is finite, and the input evolves on an open subset of of   $\Re^m$.  The dynamics are defined by a controlled rate matrix (or generator),
\begin{equation}
\Agenerate_u(x,x') = A(x,x') + \sum_{k=1}^m u_k B^k(x,x')
\label{e:broRate}
\end{equation}
where $A$ and $\{B^1,\dots,B^m\}$ are $d\times d$ matrices.     It is assumed that the input is defined by state feedback $U(t) = \phi(X(t))$.  The associated controlled rate matrix
\[
\generate(x,x') = \Agenerate_{\phi(x)}(x,x')\,,\qquad x,x'\in\state,
\]
defines the transition rates for $\bfmX$ under this policy: 
\[
\generate(x,x')  =\lim_{t\downarrow 0} \frac{1}{t} \bigl[ \Prob\{ X(t) = x' \mid X(0) = x\}  -  \ind\{ x=x'\} \bigr]
\]


Adapting the notation to the present paper, the cost function is taken of the form
\[
c_\zeta(x,u) = \zeta \kappa(x) +  \half \| u\|^2. 
\]
in which $\kappa\colon\state\to\Re$.
For the continuous time model, the average-cost optimality equation becomes
 \begin{equation}
\min_u\bigl\{ c_\zeta(x, u) +  \Agenerate_u g^*_\zeta\,(x) \bigr\} =   \gamma^*_\zeta
\label{e:ACOEgenCts}
\end{equation}  
in which $ \gamma^*_\zeta$ is the optimal average cost,  $g^*_\zeta$ is the relative value function,  and
\[
  \Agenerate_u g^*_\zeta\,(x) = \sum_{x'}  \Agenerate_u(x,x')  g^*_\zeta(x'),\qquad x\in\state.
\]
It is assumed that  $g^*_\zeta(\xz)= 0$ for some state $\xz$ and all $\zeta$.

The minimizer in \eqref{e:ACOEgenCts} defines the optimal policy $\phi^*(x)$.  For this model and cost function, the minimizer can be obtained by taking the gradient with respect to $u$ and setting this equal to zero to obtain:
\begin{equation} 
\phi^*_k(x) = -  \sum_{x'} B^k(x,x') g^*_\zeta(x')\,,\qquad x\in\state \, .
\label{e:ACOEgenCtsPolicy}
\end{equation}  

The ODE to solve \eqref{e:ACOEgenCts} takes the following steps.   First, if $\zeta=0$ then obviously $\phi^*\equiv 0$ and  $g^*_0\equiv 0$.   This sets the initial condition for the ODE.    For other $\zeta$ we have as before that $G^*_\zeta = \ddzeta g^*_\zeta$ solves Poisson's equation for the generator $\Acgenerate_\zeta$ obtained with policy $\phi^*_\zeta$:
\begin{equation}
\kappa(x) + \sum_j \Acgenerate_\zeta(x,x')  G^*_\zeta(x') = \barkappa_\zeta ,\qquad x\in\state\,,
\label{e:BrockettFish}
\end{equation}
where $\barkappa_\zeta$ is the steady-state mean of $\kappa$ under this policy, and
\[
\Acgenerate_\zeta(x,x')  = \Agenerate_{\phi^*_\zeta(x)} (x,x')  
\]

\smallbreak

The following numerical example from \cite{bro08} will clarify the construction of the ODE.
In this example $m=1$ so that $u$ is scalar-valued,  and $\state=\{1,2,3\}$.  
Denote $B=B^1$, where in this example
\[
A = 
\begin{bmatrix}
-1 & 1 & 0
\\
1 & -2 & 1
\\
0 & 1 & -1
\end{bmatrix}\, ,
\qquad
B = 
\begin{bmatrix}
-1 & 1 & 0
\\
0 & 0 & 0
\\
0 & 1 & -1
\end{bmatrix}
\]
The input is restricted to $\{u\in\Re : u>-1\}$.
For $\zeta>0$, the cost function is designed to penalize the first and third states:   $c(1)=c(3)=3$,  and $c(x^2)=0$.   In \cite{bro08} the case $\zeta=1/2$ is considered, for which it is shown that $\phi^* (1) =\phi^* (3) =  \sqrt{12} - 3$, and $\phi^* (x^2) =0$. 

Written in vector form,  Poisson's equation \eqref{e:BrockettFish} becomes
\begin{equation}
3 b
+
A_\zeta v_\zeta =
\barkappa_\zeta 
e
\label{e:BrockettFishEx}
\end{equation}
in which  $b^\transpose =(1,0,1)$,  $e^\transpose =(1,1,1)$,
$v_\zeta(i) = G^*_\zeta(x^i)$, and
\[
A_\zeta(i,j) = A(i,j) + \phi_\zeta^*(x^i) B(i,j)\,,\qquad 1\le i,j\le 3.
\]

This example is designed to have   simple structure.
From the form of the optimal policy \eqref{e:ACOEgenCtsPolicy}, it follows that $\phi_\zeta^*(2)=0$ for any $\zeta$.   Moreover,  from symmetry of the model it can be shown that  $\phi_\zeta^*(1) =  \phi_\zeta^*(3)$, and $g^*_\zeta(1) = g^*_\zeta(3)$.  We take $\xz=2$ so that $g^*_\zeta(2) = 0$.  Consequently, the  3-dimensional ODE for $g^*_\zeta$ will reduce to a one-dimensional ODE for 
$\xi_\zeta\eqdef g^*_\zeta(1)$.

The expression for the optimal policy \eqref{e:ACOEgenCtsPolicy} also gives
\[
\phi^*_\zeta(1) = -\xi_\zeta  \sum_j B(1,j) b(j)= \xi_\zeta
\]
And, since the second row of $B$ is zero, it follows that  $A_\zeta = A + \xi_\zeta B$.

We have $\ddzeta \xi_\zeta  = G^*_\zeta(1)= v_\zeta(1)$, and Poisson's equation \eqref{e:BrockettFishEx} becomes
\[
3 b
+
v_\zeta(1)
A_\zeta b =
\barkappa_\zeta  e
\]
The first two rows of this vector equation give
\[
\begin{aligned}
 3  + [-1 +\xi_\zeta (-1)  ] v_\zeta(1) &=\barkappa_\zeta
 \\
 0+ 2 v_\zeta(1)   &=\barkappa_\zeta
\end{aligned}
\]
Substituting the second equation into the first gives
\[
\ddzeta \xi_\zeta = v_\zeta(1) = \frac{3}{3+ \xi_\zeta}
\]
On making the change of variables $f_\zeta=3+ \xi_\zeta$ we obtain
\[
\half \ddzeta f_\zeta^2 =
f_\zeta \ddzeta f_\zeta  = 3    ,
\]
whose solution is given by $   f_\zeta^2  =   f_0^2 + 6\zeta $, with $   f_0^2 = 9$.  

In summary,  $\phi^*_\zeta(1) = \xi_\zeta=-3+ f_\zeta$, giving
\[
\phi^*_\zeta(1) =\phi^*_\zeta(3) = -3 + \sqrt{9 + 6\zeta}\,, \quad \phi^*_\zeta(2) =0.
\]
It is necessary to restrict $\zeta$ to  the interval   $(-5/6,\infty)$
to respect the constraint that $\phi^*_\zeta(x)>-1$ for all $x$.
 
Based on the formula $\ddzeta  \gamma^*_\zeta= \barkappa_\zeta$  and the preceding formula $ \barkappa_\zeta =2v_\zeta(1) = 2 \ddzeta \xi_\zeta$, it follows that
\[
 \gamma^*_\zeta = 2\xi_\zeta =  -6 + 2\sqrt{9 + 6\zeta}\,, \qquad \zeta>-5/6\, ;
\]
a concave function of $\zeta$, as predicted by \Prop{t:etaConvex}.

 \smallbreak

%
%

%
%
%
%

\section{MDPs with Kullback--Leibler Cost} 
\label{s:design}

The general results of the previous section are now applied to a particular class of MDP models. 
 
\subsection{Assumptions and Notation}   

The dynamics of the MDP are assumed of the form \eqref{e:TodMDPu},  where the action space consists of a convex subset of probability mass functions on $\state$.   
The welfare function is assumed to be affine in $\zeta$, as assumed  in \Prop{t:etaConvex}.  To maintain notational consistency with prior work \cite{busmey14,meybarbusyueehr15,busmey16v} we denote 
\begin{equation}
\reward_\zeta = \reward_0 + \zeta \util\,, \qquad \zeta\in\Re\,,
\label{e:KLW}
\end{equation}
and assume that $\util\colon\state\to\Re$ is a function of the state only.  
In the notation of \Prop{t:etaConvex}, we have $\rewardDer(x,u)=\util(x)$ for all $x,u$.
Under these conditions, it was shown in
\Theorem{t:ODEforMDP} that the ODE \eqref{e:ODEmain}
 is homogeneous. 

The first term $ \reward_0$ in \eqref{e:KLW}
 is the negative of a control cost.  Its definition
begins with the specification of a transition matrix $P_0$ that describes  nominal (control-free) behavior.   It is assumed to be \textit{irreducible and aperiodic}.  Equivalently, there is  $n_0\ge 1$ such that for each   $x, x'\in\state$,
\begin{equation}
  P^n_0(x,x') >0,\qquad x\in\state,\ n\ge n_0.
\label{e:unichain}
\end{equation}
It follows that $P_0$ admits a unique invariant pmf, denoted $\pi_0$.   

In the MDP model we deviate from this nominal behavior, but restrict to transition matrices satisfying $P(x,\varble)\prec P_0(x,\varble)$ for each $x$.  In fact,  the optimal solutions will be equivalent:
\begin{equation}
P(x,x')>0\Longleftrightarrow P_0(x,x')>0,\qquad \text{for all } x,x'\in\state
\label{e:PprecP}
\end{equation}
Under this condition it follows that $P$ is also irreducible and aperiodic. 

The following representation will be used in different contexts throughout the paper.   Any function $h\colon\state\times\state\to\Re$ is regarded as an unnormalized log-likelihood ratio:  Denote for $x,x'\in\state$,
\begin{equation}
P_h(x,x') \eqdef P_0(x,x')\exp\bigl(  h( x' \mid x )  -  \EFn{h}(x)    \bigr),  
\label{e:Ph-a}
\end{equation}
in which $h( x' \mid x )$ is the value of $h$ at $(x,x')\in\state\times\state$, and $ \EFn{h}(x)$ is the normalization constant,
\begin{equation}
    \EFn{h}(x)     
\eqdef  \log\Bigl( \sum_{x'} P_0(x,x')\exp\bigl(  h(x'\mid  x)     \bigr) \Bigr)
\label{e:EFn}
\end{equation}

For any transition matrix $P$, an invariant pmf  is interpreted as a row vector, so that invariance can be expressed $\pi P=\pi$.   Any function $f\colon\state\to\Re$ is interpreted as a $d$-dimensional column vector,  and we use the standard notation $Pf\, (x) =   \sum_{x'}P(x,x')f(x')$,  $x\in\state$.

The \textit{fundamental matrix} is   the inverse, 
\begin{equation}
Z = [I - P + 1\otimes \pi]^{-1}
\label{e:fundKernGen}
\end{equation}
where $1\otimes \pi$ is a matrix in which each row is identical, and equal to $ \pi$.  
If $P$ is irreducible and aperiodic, then it can be expressed as a power series: 
\begin{equation}
Z = \sum_{n=0}^\infty  [P - 1\otimes \pi]^n
\label{e:Z}
\end{equation}
with $ [P - 1\otimes \pi]^0 \eqdef I$  (the $d\times d$ identity matrix),
and $ [P - 1\otimes \pi]^n  = P^n - 1\otimes \pi$  for $n\ge 1$.  

The \textit{Donsker-Varadhan rate function} is denoted,
\begin{equation}
K(P\| P_0) = \sum_{x,x'} \pi(x) P(x,x')   \log \Bigl(\frac{P(x,x') }{P_0(x,x')} \Bigr)
\label{e:DVrate}
\end{equation}  
Letting $\Pi(x,x') = \pi(x) P(x,x')  $ and $\Pi_0(x,x') =\pi(x) P_0(x,x')$,   we have
\begin{equation}
K(P\| P_0) = D(\Pi \| \Pi_0)
\label{e:KD}
\end{equation}
where $D$ denotes  K-L divergence.  It is called a ``rate function'' because it defines the relative entropy rate between two stationary Markov chains, and appears in the theory of large deviations for Markov chains \cite{konmey05a}.

For the transition matrix $P_h$ defined in \eqref{e:Ph-a},  the rate function can be expressed in terms of  its  invariant pmf $\pi_h$, the bivariate pmf  $\Pi_h(x,x') = \pi_h(x) P_h(x,x')  $, and the log moment generating function \eqref{e:EFn}:
\begin{equation}
\begin{aligned}
K(P_h\| P_0) &= \sum_{x,x'} \Pi_h(x,x')    \bigl[  h( x' \mid x )  -  \EFn{h}(x) \bigr]
\\
&= \sum_{x,x'}\Pi_h(x,x')      h( x' \mid x )  -     \sum_x \pi_h(x) \EFn{h}(x) 
\end{aligned}
\label{e:DVrate-h}
\end{equation}

As in \cite{tod07,guaragwil14,meybarbusyueehr15}, the rate function is used here 
to model the cost of deviation from the nominal transition matrix $P_0$:  the control objective in this prior work  can be cast as
the solution to the convex optimization problem,
\begin{equation}
\eta^*_\zeta =\max_{\pi,P} \bigl \{\zeta \pi(\util) -  K(P\| P_0)  :  \pi   P  = \pi \bigr\}
\label{e:ARobjectiveReward}
\end{equation}
where $\util\colon\state\to\Re$, and the maximum is over all transition matrices.

\spm{We could have a problem with plagiarism in the following discussion -- it is identical to CDC.   We should shorten, and cite our CDC paper}

\paragraph*{Nature \&\ nurture}   

In many applications it is necessary to include a model of randomness from nature along with the randomness introduced by the local control algorithm (nurture).  This imposes additional constraints in the optimization problem \eqref{e:ARobjectiveReward}.

Consider a Markov model in which the full state space  is the cartesian product of two finite state spaces:  $\state= \stateu\times\staten$,   where  $\stateu$ are components of the state that can be directly manipulated through control.   
The ``nature'' components $\staten$ are not subject to direct control.  For example, these variables may be used to model service and arrival statistics in a queueing model,   or uncertainty in terrain in an application to robotics.

 Elements of   $\state$ are   denoted   $x=(x_u,x_n)$.
Any state  transition matrix under consideration is assumed to have the following conditional-independence structure,
\begin{equation}
P(x,x') = R(x, x_u') Q_0(x,x_n') ,\quad x\in\state, x_u'\in\stateu,\ x_n'\in\staten 
\label{e:PQ0R}
\end{equation}
where
$
\sum_{x_u'} R(x, x_u')=\sum_{x_n'}  Q_0(x,x_n') =1
$
for each $x$.
The matrix $Q_0$ is out of our control -- this models dynamics such as the weather.

To underscore the generality of this model, consider a standard  MDP model with finite  state space $\S$, finite action space $\A$,  and controlled transition law $\varrho$.  Letting $\bfPhi$ denote the state process and $\bfmU$ the input process, we have for any two states $s,s'$, and any action $a$, 
\[
\Prob\{ \Phi(t+1) = s' \mid \Phi(t) = s,\   U(t) = a \}  = \varrho(s' \mid s,a)
\]
A randomized policy is defined by a function  $\phi \colon \A\times\S\to [0,1] $ for which $\phi(\varble \mid s)$ is a probability law on $\A$ for each $s\in\S$. 

\begin{proposition}
\label{t:MDPsubsetNatureNurture}
Consider the MDP model with transition law $\varrho$ and randomized policy $\phi$.   For each $t\ge 0$ denote $X_n(t)=\Phi(t)$ and $X_u(t)=U(t-1)$,  where $X(0) = (U(-1),\Phi(0))$ is the initial condition.   Then $\bfmX=(\bfmX_u,\bfmX_n)$ is a Markov chain on $\state=\A\times\S$,  with transition matrix of the form \eqref{e:PQ0R}, where for $ x,x'\in\state$,
\[
Q_0(x,x_n') = \varrho(x_n' \mid x_n,\ x_u),\quad R(x,x_u') = \phi(x_u' \mid x_n), 
\] 
\end{proposition}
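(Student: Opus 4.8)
The plan is to verify directly, from the definitions $X_u(t)=U(t-1)$ and $X_n(t)=\Phi(t)$, that the one-step conditional law of $X(t+1)$ given the entire past collapses to a function of $X(t)$ alone, and that this function factors as in \eqref{e:PQ0R}. First I would fix $t\ge 0$ and a target state $x'=(x_u',x_n')\in\A\times\S$, and write $X(t+1)=(U(t),\Phi(t+1))$, so that the event $\{X(t+1)=x'\}$ coincides with $\{U(t)=x_u',\ \Phi(t+1)=x_n'\}$. Conditioning on the history $\clF_t=\sigma(\Phi(0),U(0),\dots,U(t-1),\Phi(t))$, which determines $X(0),\dots,X(t)$, I would apply the chain rule to split this joint probability into the probability of selecting the action $U(t)=x_u'$ and the probability of the subsequent nature transition $\Phi(t+1)=x_n'$.

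For the first factor, the randomized policy gives $U(t)\sim\phi(\,\cdot\mid\Phi(t))$, so that $\Prob\{U(t)=x_u'\mid\clF_t\}=\phi(x_u'\mid\Phi(t))=\phi(x_u'\mid x_n)$, depending on the past only through the current nature coordinate $x_n=X_n(t)$. For the second factor I would invoke the transition law $\varrho$ together with the Markov structure of $\bfPhi$, so that the nature move is governed by the transition kernel $\varrho(x_n'\mid x_n,\,\cdot\,)$. The key step, and the point on which the product form \eqref{e:PQ0R} rests, is the conditional independence, given $X(t)$, of the freshly drawn action from the realized nature transition: once this decoupling is established, the joint probability becomes $R(x,x_u')\,Q_0(x,x_n')$ with $R(x,x_u')=\phi(x_u'\mid x_n)$ and $Q_0(x,x_n')=\varrho(x_n'\mid x_n,x_u)$, and neither factor involves the history beyond $X(t)=(x_u,x_n)$.

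Having the transition in product form, I would close with two routine verifications. First, the expression depends on the past only through $X(t)$, which is precisely the Markov property, so $\bfmX$ is a Markov chain on $\state=\A\times\S$. Second, $\sum_{x_u'}R(x,x_u')=\sum_{x_u'}\phi(x_u'\mid x_n)=1$ and $\sum_{x_n'}Q_0(x,x_n')=\sum_{x_n'}\varrho(x_n'\mid x_n,x_u)=1$, so $R$ and $Q_0$ are genuine stochastic factors of exactly the form required by \eqref{e:PQ0R}, with $Q_0$ out of the controller's reach as claimed.

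I expect the main obstacle to be the bookkeeping forced by the one-step time shift $X_u(t)=U(t-1)$: one must attribute the action that governs the transition out of $\Phi(t)$ to the correct coordinate, so that the nature factor is carried by the current-state action $x_u$ while the policy factor is carried by the current nature state $x_n$. Making the conditional-independence decoupling precise, namely that the choice of $U(t)$ and the realization of $\Phi(t+1)$ separate once we condition on $X(t)$, is where the argument has genuine content; the normalization and Markov-property steps are then immediate.
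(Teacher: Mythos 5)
Your opening moves match the paper's proof: chain-rule factorization of $\Prob\{U(t)=x_u',\,\Phi(t+1)=x_n'\mid X(t)=x\}$ and identification of the policy factor $\phi(x_u'\mid x_n)$. But the step you yourself single out as carrying the ``genuine content'' --- that the freshly drawn action $U(t)$ and the nature transition $\Phi(t+1)$ are conditionally independent given $X(t)=(U(t-1),\Phi(t))$ --- is false under the dynamics as the paper defines them, and this is a genuine gap, not bookkeeping. By definition,
\[
\Prob\{ \Phi(t+1) = s' \mid \Phi(t) = s,\   U(t) = a \}  = \varrho(s' \mid s,a),
\]
so the transition out of $\Phi(t)$ is driven by the \emph{fresh} action $U(t)$; unless $\varrho$ is degenerate in its action argument, $\Phi(t+1)$ and $U(t)$ remain coupled after conditioning on $X(t)$. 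For the same reason, your attribution of the nature factor to the current-state coordinate $x_u=U(t-1)$ contradicts the definition of the model: in the chain-rule decomposition the nature factor is conditioned on $U(t)=x_u'$, hence equals $\varrho(x_n'\mid x_n,x_u')$ --- it carries the \emph{next}-state control coordinate. No decoupling of $U(t)$ from $\Phi(t+1)$ is needed anywhere in the paper's argument; the only conditional independence it invokes is that of the pair $(U(t-1),\Phi(t+1))$ given $(\Phi(t),U(t))$, used solely to delete the stale coordinate $U(t-1)$ from the conditioning event, together with the fact that the randomized policy makes $U(t)$ depend on the past only through $\Phi(t)$.

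For calibration: your final formula $Q_0(x,x_n')=\varrho(x_n'\mid x_n,x_u)$ agrees with the proposition as printed, but not with what the paper's own displayed computation produces --- the line before the conclusion gives $\varrho(x_n'\mid x_n,x_u')$, and the $x_u$ in the last line (and in the statement) is inconsistent with that derivation. The distinction is exactly the point above: a factorization of the form \eqref{e:PQ0R}, with a nature factor depending only on $x=(x_u,x_n)$, is \emph{equivalent} to the decoupling you assumed, i.e., to the action having no same-step effect on $\bfPhi$; one checks this by summing $P(x,x')=\phi(x_u'\mid x_n)\,\varrho(x_n'\mid x_n,x_u')$ over $x_n'$, which forces $R(x,x_u')=\phi(x_u'\mid x_n)$ and then forces $\varrho(x_n'\mid x_n,\cdot)$ to be constant on the support of the policy. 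So your proof cannot be repaired by supplying the missing independence argument --- that independence is not there to be proved. The sound route is the paper's: conditioning, not independence, produces the product structure, and the nature kernel then inherits the argument $x_u'$.
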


\begin{proof}
From the definitions and Bayes' rule,
\[
\begin{aligned}
 \Prob\{ &X_u    (t+1) = x_u',\  X_n(t+1) = x_n'   \mid X(t) = x\}  
 \\
 & =  \Prob\{ X_n(t+1) = x_n' \mid  X_u(t+1) = x_u' ,\   X(t) = x\}   
\Prob\{   X_u(t+1) = x_u'   \mid X(t) = x\}  
 \\
 & =  \Prob\{ \Phi(t+1) = x_n' \mid  U(t) = x_u' ,\   X(t) = x\} 
   \Prob\{    U(t) = x_u'   \mid X(t) = x\}  
\end{aligned}
\]
Recall that $X(t) = (\Phi(t), U(t-1))$.    The pair $(U(t-1),\Phi(t+1) )$ are conditionally independent given $(\Phi(t),U(t))$,  so that the right hand side becomes,
\[
 \Prob\{ \Phi(t+1) = x_n' \mid  U(t) = x_u' ,\   \Phi(t) = x_n\}   \Prob\{    U(t) = x_u'   \mid \Phi(t) = x_n\}  
\]
This establishes the desired result:
\[
\begin{aligned}
 \Prob\{ X_u(t+1) = x_u',\  & X_n(t+1) = x_n'   \mid X(t) = x\}  
 \\
 & = \varrho(x_n' \mid x_n,\ x_u)  \phi(x_u' \mid x_n)\end{aligned}
\]
\end{proof}

\subsection{Optimal control with Kullback--Leibler cost}
\label{s:AROE}

We consider now the optimization problem \eqref{e:ARobjectiveReward}, subject to the structural constraint \eqref{e:PQ0R},  with $Q_0$ fixed.  The maximizer defines a transition matrix that is denoted,
\begin{equation}
\cP_\zeta =\argmax_P \bigl \{\zeta \pi(\util) -  K(P\| P_0)  : \pi   P  = \pi  \bigr\}
\label{e:ARobjective}
\end{equation}
It is shown in \Prop{t:IDPexists}  that this can be cast as a convex program, even when subject to the structural constraint \eqref{e:PQ0R}. 
The optimization variable in the convex program will be taken to be pmfs $\Pi$ on the product space $\state\times\stateu$. 

Define for each $\pi$ and $R$ the pmf on $\state\times\stateu$,   
\begin{equation}
\Pi_{\pi,R}(x,x_u') = \pi(x) R(x,x_u'),\quad  x\in\state, x_u'\in\stateu\, .
\label{e:PiRdef}
\end{equation}
The pmf $\pi$ can be recovered from $\Pi_{\pi,R}$ via $\pi(x) = \sum_{x_u'}\Pi_{\pi,R}(x,x_u') $,  $x\in\state$,  and the matrix $R$ can also be recovered   via $ R(x,x_u') = \Pi_{\pi,R}(x,x_u')/\pi(x)$, provided $\pi(x)>0$.

The following result shows that we can restrict to $R$ for which $  \Pi_{\pi,R}\prec  \Pi_{\pi_0,R_0}$.
\begin{lemma}
\label{t:bK}
For any transition matrix $P$,
\[
K(P\| P_0)<\infty \ \Longleftrightarrow \ \Pi_{\pi,R}\prec  \Pi_{\pi_0,R_0}
\]
\end{lemma}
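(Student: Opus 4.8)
The plan is to reduce the claim to the elementary fact that a relative entropy between two pmfs on a finite set is finite if and only if the first is absolutely continuous with respect to the second, and then to carefully track how the common factor $Q_0$ and the choice of marginal ($\pi$ versus $\pi_0$) affect the supports.

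First I would rewrite $K(P\|P_0)$ so that the uncontrolled dynamics $Q_0$ drop out. Writing $P(x,x')=R(x,x_u')Q_0(x,x_n')$ and $P_0(x,x')=R_0(x,x_u')Q_0(x,x_n')$ as in \eqref{e:PQ0R}, each row $P(x,\varble)$ is a product measure on $\stateu\times\staten$ sharing the identical second factor $Q_0(x,\varble)$ with $P_0(x,\varble)$. Using the convention $0\log(0/0)=0$ and cancelling $Q_0(x,x_n')$ inside the logarithm wherever it is positive (terms with $Q_0(x,x_n')=0$ vanish through their prefactor), the $x_n'$-sum collapses because $\sum_{x_n'}Q_0(x,x_n')=1$, so from \eqref{e:DVrate} one obtains, as an identity in $[0,\infty]$,
\[
K(P\|P_0)=\sum_{x,x_u'}\pi(x)R(x,x_u')\log\frac{R(x,x_u')}{R_0(x,x_u')} = D\bigl(\Pi_{\pi,R}\,\|\,\Pi_{\pi,R_0}\bigr),
\]
where $\Pi_{\pi,R_0}(x,x_u')=\pi(x)R_0(x,x_u')$ is defined exactly as in \eqref{e:PiRdef}. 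Both $\Pi_{\pi,R}$ and $\Pi_{\pi,R_0}$ are pmfs on $\state\times\stateu$, so the right-hand side is a genuine relative entropy.

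Next I would invoke the finite-alphabet characterization: $D(\Pi_{\pi,R}\|\Pi_{\pi,R_0})<\infty$ iff $\Pi_{\pi,R}\prec\Pi_{\pi,R_0}$. (If domination fails there is a pair $(x,x_u')$ with $\Pi_{\pi,R_0}=0<\Pi_{\pi,R}$, contributing $+\infty$ while every other term is bounded below; if it holds, the finite sum of finite terms is finite.) Unwinding the definitions, $\Pi_{\pi,R}\prec\Pi_{\pi,R_0}$ is equivalent to the pointwise support condition: for every $x$ with $\pi(x)>0$, $R_0(x,x_u')=0$ implies $R(x,x_u')=0$, that is $R(x,\varble)\prec R_0(x,\varble)$.

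The last and only delicate step is to replace the marginal $\pi$ by $\pi_0$ in the dominating measure, i.e.\ to show $\Pi_{\pi,R}\prec\Pi_{\pi,R_0}\iff\Pi_{\pi,R}\prec\Pi_{\pi_0,R_0}$. Here I would use that $P_0$ is irreducible (see \eqref{e:unichain}), so its invariant pmf satisfies $\pi_0(x)>0$ for every $x$. Consequently $\Pi_{\pi_0,R_0}(x,x_u')=0\iff R_0(x,x_u')=0$, whereas $\Pi_{\pi,R}(x,x_u')$ vanishes automatically on every row with $\pi(x)=0$. Splitting the domination condition into the rows with $\pi(x)=0$ (where it is vacuous) and those with $\pi(x)>0$ (where the factor $\pi(x)$ does not affect which entries vanish) shows that $\Pi_{\pi,R}\prec\Pi_{\pi_0,R_0}$ reduces to the very same condition ``$R(x,\varble)\prec R_0(x,\varble)$ for all $x$ with $\pi(x)>0$'' found above, closing the chain of equivalences. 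I expect this marginal swap to be the main obstacle to state cleanly, since it is the single place where positivity of $\pi_0$ (hence irreducibility of $P_0$) is essential; the rest is bookkeeping around the $Q_0$ cancellation and the standard entropy fact.
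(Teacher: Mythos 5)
Your proof is correct, and it takes a genuinely different route from the paper's, which is worth comparing. The paper argues by direct support bookkeeping: finiteness of $K(P\| P_0)$ forces $R(x,\varble)\prec R_0(x,\varble)$ for every $x$ with $\pi(x)>0$ together with $\pi\prec\pi_0$, and these are chained into $\pi_0(x)R_0(x,x_u')=0\Rightarrow \pi(x)R_0(x,x_u')=0\Rightarrow \pi(x)R(x,x_u')=0$; the converse is disposed of in one line by the convention $s\log(s)=0$. You instead route the equivalence through the identity $K(P\| P_0)=D(\Pi_{\pi,R}\,\|\,\Pi_{\pi,R_0})$, obtained by cancelling the shared factor $Q_0$ --- an identity the paper itself derives only later, inside the proof of \Prop{t:IDPexists} (compare \eqref{e:AROEcost}) --- then invoke the standard finite-alphabet fact that relative entropy is finite iff absolute continuity holds, and finish with the marginal swap from $\Pi_{\pi,R_0}$ to $\Pi_{\pi_0,R_0}$. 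Your packaging buys two things: the lemma becomes a corollary of one textbook fact plus the divergence identity that the paper needs anyway for \Prop{t:IDPexists}, and it isolates exactly where irreducibility of $P_0$ enters, namely that $\pi_0(x)>0$ for all $x$ makes the marginal swap trivial. That is precisely the point the paper's proof treats most casually: its intermediate claim $\pi\prec\pi_0$ is asserted without justification, even though it too is immediate from the same positivity of $\pi_0$. The paper's argument is shorter and avoids having to verify the $[0,\infty]$-valued identity with its $0\log 0$ conventions; yours is more self-contained and more explicit about which hypotheses are actually used. At bottom both rest on the same two ingredients (row-wise support containment on the support of $\pi$, and strict positivity of $\pi_0$), so the difference is one of organization rather than substance --- but your organization supplies the detail the paper leaves implicit.
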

\begin{proof} 
If $K(P\| P_0)<\infty$ then $P(x,\varble) \prec   P_0(x,\varble)$.   This implies that  $R(x,\varble) \prec   R_0(x,\varble)$ for each $x\in\state$ satisfying $\pi(x)>0$,
and also that $\pi\prec\pi_0$.    
\spm{I am looking for a short proof of this last statement.  I could iterate,   $P^n(x,\varble) \prec   P^n_0(x,\varble)$, but this is messy. Can we just leave the statement as-is?}

Hence, if $K(P\| P_0)<\infty$,  then for each $x$ and $x_u'$,
\[
\begin{aligned}
\pi_0(x) R_0(x,x_u') = 0 \  & \Rightarrow \ \pi(x) R_0(x,x_u') = 0 \\
		& \Rightarrow \ \pi(x) R(x,x_u') = 0    \,,
\end{aligned}
\]
which establishes one implication:  $\Pi_{\pi,R}\prec  \Pi_{\pi_0,R_0}$ whenever
 $K(P\| P_0)<\infty $.

Conversely, if $\Pi_{\pi,R}\prec  \Pi_{\pi_0,R_0}$ then $K(P\| P_0)<\infty $ by the definition of $K$, and the convention $s\log(s)=0$ when $s=0$.  
\end{proof}

\Lemma{t:bK}
 is one step towards the proof of the following convex program representation of  \eqref{e:ARobjectiveReward}:
 
\begin{proposition}
\label{t:IDPexists}  
The objective function in  \eqref{e:ARobjectiveReward} is concave in the variable $\Pi=\Pi_{\pi,R}$, subject to the convex constraints,
\begin{subequations}
\begin{align} 
\null  & 
   \Pi  \   \text{is a pmf on $\state\times \stateu$}
 \label{e:PiRconstraintsA}
  \\
\null & 
  			 \Pi \prec  \Pi_0,\quad \text{with $\Pi_0(x,x_u') = \pi_0(x) R_0(x,x_u')$}
\label{e:PiRconstraintsB}
  \\
\null &
   \sum_x Q_0(x,x_n')  \Pi (x,x_u')
  = \sum_{x_u}\Pi (x',x_u) \quad  \text{for}\ x'=(x_u',x_n')\in\state 
\label{e:PiRconstraintsC}
\end{align}
\end{subequations}
It admits an optimizer  
$
\Pi ^*(x,x_u') =\cpi_\zeta(x)\cR_\zeta(x,x_u')
$,
in which $\cpi_\zeta(x)>0$ for each $x$.  
Consequently,  there exists an optimizer $\cP_\zeta $  for \eqref{e:ARobjective}, 
with invariant pmf   $\cpi_\zeta$.
\end{proposition}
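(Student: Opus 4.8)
The plan is to recast \eqref{e:ARobjectiveReward} as a maximization over the single variable $\Pi=\Pi_{\pi,R}$ and to show it is a concave program with a solution in the relative interior. The key computation comes first: rewrite both terms of the objective through $\Pi$. The reward term is linear, since $\pi(x)=\sum_{x_u'}\Pi(x,x_u')$ gives $\pi(\util)=\sum_{x,x_u'}\Pi(x,x_u')\util(x)$. For the cost, I use the factorization $P(x,x')=R(x,x_u')Q_0(x,x_n')$ from \eqref{e:PQ0R} together with the corresponding factorization $P_0(x,x')=R_0(x,x_u')Q_0(x,x_n')$ of the nominal kernel, so that the likelihood ratio $P(x,x')/P_0(x,x')=R(x,x_u')/R_0(x,x_u')$ involves only $R$; summing out $x_n'$ with $\sum_{x_n'}Q_0(x,x_n')=1$ yields
\[
K(P\| P_0)=\sum_{x,x_u'}\Pi(x,x_u')\log\frac{\Pi(x,x_u')}{\pi(x)R_0(x,x_u')}.
\]
This is where I invoke \Lemma{t:bK}: it shows the cost is finite precisely on the support constraint \eqref{e:PiRconstraintsB}, so the maximization may be restricted to that set.

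Next I would establish concavity and the convexity of the constraints. Writing $g(p,q)=p\log(p/q)$, the jointly convex perspective of $u\mapsto u\log u$, the cost becomes $K=\sum_{x,x_u'}g\bigl(\Pi(x,x_u'),\pi(x)\bigr)-\sum_{x,x_u'}\Pi(x,x_u')\log R_0(x,x_u')$. Since both $\Pi(x,x_u')$ and $\pi(x)=\sum_{y}\Pi(x,y)$ are linear in $\Pi$ and $g$ is jointly convex, the first sum is convex and the second is linear, so $K$ is convex and the objective $\zeta\pi(\util)-K$ is concave. Every constraint is affine in $\Pi$: \eqref{e:PiRconstraintsA} is the simplex, \eqref{e:PiRconstraintsB} is a support constraint, and \eqref{e:PiRconstraintsC} encodes invariance $\pi P=\pi$, because $\pi P(x')=\sum_x\Pi(x,x_u')Q_0(x,x_n')$ must equal $\pi(x')=\sum_{x_u}\Pi(x',x_u)$. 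Hence the feasible region is convex.

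For existence I would note that the feasible region is a closed subset of the simplex, so it is compact, and it is nonempty because $\Pi_0=\Pi_{\pi_0,R_0}$ is feasible (it satisfies \eqref{e:PiRconstraintsC} as $\pi_0P_0=\pi_0$). With the convention $0\log 0=0$ the objective is continuous there, so a maximizer $\Pi^*$ exists by the Weierstrass theorem. I then define $\cpi_\zeta(x)=\sum_{x_u'}\Pi^*(x,x_u')$ and $\cR_\zeta(x,x_u')=\Pi^*(x,x_u')/\cpi_\zeta(x)$ wherever $\cpi_\zeta(x)>0$, so that $\Pi^*=\cpi_\zeta\cR_\zeta$ automatically.

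The hard part is the strict positivity $\cpi_\zeta(x)>0$ for every $x$, and this is exactly where the K--L structure is decisive. I would argue by contradiction along the feasible segment $\Pi_\epsilon=(1-\epsilon)\Pi^*+\epsilon\Pi_0$, $\epsilon\in[0,1]$. If $\Pi^*(x,x_u')=0$ while $\Pi_0(x,x_u')>0$ for some coordinate $(x,x_u')$, then the matching term of $K(\Pi_\epsilon)$ behaves like $s\log s$ with $s=\epsilon\Pi_0(x,x_u')\downarrow 0$, whose derivative $\log s+1$ tends to $-\infty$, while every other term and the reward term contribute a finite derivative; hence the directional derivative $\frac{d}{d\epsilon}\{\zeta\pi_\epsilon(\util)-K(\Pi_\epsilon)\}\big|_{\epsilon=0^+}=+\infty$, contradicting optimality of $\Pi^*$. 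Therefore $\Pi^*$ has the same support as $\Pi_0$; since $P_0$ is irreducible each $x$ admits some $x_u'$ with $R_0(x,x_u')>0$, forcing $\cpi_\zeta(x)>0$. Finally $\cR_\zeta(x,x_u')>0\iff R_0(x,x_u')>0$ shows $\cP_\zeta(x,x')=\cR_\zeta(x,x_u')Q_0(x,x_n')$ has exactly the support of $P_0$, so it is irreducible and aperiodic with unique invariant pmf $\cpi_\zeta$ by \eqref{e:PiRconstraintsC}; the value-preserving correspondence $\Pi\leftrightarrow(\pi,P)$ on finite-cost feasible points then shows $\cP_\zeta$ attains the maximum in \eqref{e:ARobjective}.
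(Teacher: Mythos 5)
Your concavity and existence arguments coincide with the paper's own proof: the paper rewrites the objective as $\zeta\sum_{x,x_u'}\Pi(x,x_u')\util(x)-D(\Pi_{\pi,R}\|\Pi_{\pi,R_0})$ and invokes joint convexity of K-L divergence together with linearity of $\Pi\mapsto\Pi_{\pi,R_0}$ — which is exactly what your perspective-function computation establishes — and then gets existence from compactness and continuity, as you do. Where you diverge is the strict positivity $\cpi_\zeta(x)>0$: the paper's proof does not argue this point at all (it is asserted in the statement and used later in the duality argument), so your attempt to supply a proof goes beyond the paper — but it has a genuine gap.

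The gap: your claim that the term of $K(\Pi_\epsilon)$ at a coordinate with $\Pi^*(x,x_u')=0<\Pi_0(x,x_u')$ "behaves like $s\log s$" silently assumes that the denominator $\pi_\epsilon(x)R_0(x,x_u')$ in the log-ratio stays bounded away from zero, i.e.\ that $\cpi_\zeta(x)>0$ — the very thing being proved. If the entire row vanishes, $\cpi_\zeta(x)=0$, then $\pi_\epsilon(x)=\epsilon\pi_0(x)$ scales at the same rate as $\Pi_\epsilon(x,x_u')=\epsilon\Pi_0(x,x_u')$, and the singularity cancels exactly:
\[
\Pi_\epsilon(x,x_u')\log\frac{\Pi_\epsilon(x,x_u')}{\pi_\epsilon(x)R_0(x,x_u')}
=\epsilon\,\Pi_0(x,x_u')\log\frac{\Pi_0(x,x_u')}{\pi_0(x)R_0(x,x_u')}=0,
\]
since $\Pi_0(x,x_u')=\pi_0(x)R_0(x,x_u')$. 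Such rows contribute zero to $K(\Pi_\epsilon)$ for every $\epsilon$, the directional derivative remains finite, and no contradiction is obtained. Your argument therefore proves only that every row with $\cpi_\zeta(x)>0$ has the same support as $R_0(x,\varble)$; it does not exclude vanishing rows, so the conclusion "therefore $\Pi^*$ has the same support as $\Pi_0$" does not follow as stated. The fix is short: let $Z=\{x:\cpi_\zeta(x)=0\}$ and suppose $Z\neq\emptyset$. Since $\Pi^*$ is a pmf, $Z^c\neq\emptyset$, and irreducibility of $P_0$ yields $y\notin Z$ and $y'=(y_u',y_n')\in Z$ with $P_0(y,y')=R_0(y,y_u')Q_0(y,y_n')>0$ (take the first step at which a positive-probability $P_0$-path from $Z^c$ enters $Z$). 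Your derivative argument, valid for the row $y$ because $\cpi_\zeta(y)>0$, gives $\Pi^*(y,y_u')>0$; then constraint \eqref{e:PiRconstraintsC} gives $\cpi_\zeta(y')\ge Q_0(y,y_n')\Pi^*(y,y_u')>0$, contradicting $y'\in Z$. Hence $Z=\emptyset$, and the remainder of your argument (identical support, irreducibility and aperiodicity of $\cP_\zeta$, and optimality in \eqref{e:ARobjective}) goes through.
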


\begin{proof}
We first consider the constraints: (a) is by definition, and (c) is the invariance constraint for $(\pi,P)$. Constraint (b) is without loss of generality,  given \Lemma{t:bK}.
  
Next we turn to the objective function: 
The function to be maximized in \eqref{e:ARobjectiveReward} can be expressed 
\[
\zeta \pi(\util) -  K(P\| P_0) = \sum_x \pi(x)\reward(x,R) 
\]
in which
\begin{equation}
\begin{aligned}
\reward(x,R)  &= \zeta \util(x) 
		-  \sum_{x'}  P(x,x') \log \Bigl(\frac{P(x,x') }{P_0(x,x')} \Bigr)  
		\\
&= \zeta \util(x) 
		-  \sum_{x'_u}  R(x,x'_u) \log \Bigl(\frac{R(x,x'_u) }{R_0(x,x_u')} \Bigr)  
\end{aligned}
\label{e:AROEcost}
\end{equation}
The second equation follows from the assumption that $P$ depends on $R$ through \eqref{e:PQ0R}.   Multiplying each side by $\pi(x)$ and summing over $x$ we obtain  a representation in terms of the variable $\Pi_{\pi,R}$, with $\zeta \pi(\util) -  K(P\| P_0)  =$
\[ 
 \zeta  \sum_{x,x_u'}  \Pi_{\pi,R}(x,x_u')  \util(x)  -  D(\Pi_{\pi,R}\| \Pi_{\pi,R_0})
\] 
The  K-L divergence $D$ is known to be jointly convex in its two arguments  \cite{demzei98a}.   Since $\Pi_{\pi,R_0}$ is a linear function of $ \Pi_{\pi,R}$, this establishes concavity.

The existence of an optimizer follows from the fact that the function to be optimized is continuous as a function of $\Pi_{\pi,R}$,   and the domain of optimization (\ref{e:PiRconstraintsA}--\ref{e:PiRconstraintsC})
 is compact.  
\end{proof}

It is shown in \Theorem{t:IPD} that the optimal value $\eta^*_\zeta$ together with a \textit{relative value function} $h^*_\zeta$ solve the average reward optimization equation (AROE):
\notes{$w(x,R)$ not  clear.   It really should be $w(x,\mu)$}
\begin{equation}
\max_R\Bigl\{
\reward(x,R)
+\sum_{x'} P(x,x') h^*_\zeta(x') \Bigr\} = h^*_\zeta(x) + \eta^*_\zeta
\label{e:AROE}
\end{equation}
Recall that the relative value function is not unique, since  a new solution is obtained by adding a non-zero constant;  the normalization  $h^*_\zeta(\xz)=0$ is imposed, where $\xz\in\state$ is a fixed state.

   The proof of \Theorem{t:IPD} is given in the Appendix.
 
\begin{theorem}
\label{t:IPD}
There exist optimizers $\{\cpi_\zeta, \cP_\zeta : \zeta\in\Re\}$,  and solutions to
the AROE    $\{h^*_\zeta,\eta^*_\zeta: \zeta\in\Re\}$ with the following properties: 
\begin{romannumx}

\item  
The optimizer $\cP_\zeta$ can be obtained from the relative value function $h^*_\zeta$ as follows:  
\begin{equation}
\cP_\zeta(x,x')
\eqdef P_0(x,x')\exp\bigl(  h_\zeta(x'_u\mid  x)  -  \EFn{h_\zeta}(x)    \bigr)
\label{e:Pzeta-a}
\end{equation}
where  for $ 
x\in\state$, $x_u'\in\stateu$,
\begin{equation}
 h_\zeta(x'_u\mid  x)  =  \sum_{x_n'} Q_0(x,x_n') h^*_\zeta(x_u',x_n'),  
\label{e:hmid}
\end{equation}
and $\EFn{h_\zeta}(x) $ is the normalizing constant \eqref{e:EFn} with   $h=h_\zeta$.

\item
$\{\cpi_\zeta, \cP_\zeta, h^*_\zeta,\eta^*_\zeta: \zeta\in\Re\}$ are continuously differentiable in the parameter $\zeta$.
\end{romannumx}
\qed
\end{theorem}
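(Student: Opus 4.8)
The plan is to handle the two claims in turn: derive the Gibbs (exponential‑tilting) form of the optimizer in part~(i) directly from the AROE, and then obtain the smoothness in part~(ii) by an implicit‑function‑theorem argument applied to the resulting scalar fixed‑point equation. The two steps are linked, because the explicit form found in~(i) is exactly what makes the Jacobian in~(ii) computable.

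For part~(i), I would first record that a solution $(h^*_\zeta,\eta^*_\zeta)$ to the AROE \eqref{e:AROE} exists. Here the per‑state action is the pmf $R(x,\varble)$ on $\stateu$ constrained by $R(x,\varble)\prec R_0(x,\varble)$; the one‑step reward $\reward(x,R)$ of \eqref{e:AROEcost} is concave and continuous on this compact action set, and the transition law $P(x,x')=R(x,x_u')Q_0(x,x_n')$ is linear in $R$. Since every feasible stationary policy induces an irreducible chain by \eqref{e:PprecP}, the model is unichain and standard average‑reward theory \cite{put14} supplies the AROE solution, with $\eta^*_\zeta$ equal to the optimal value of \eqref{e:ARobjectiveReward}. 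I would then evaluate the inner maximization: substituting \eqref{e:AROEcost} and \eqref{e:PQ0R} and collecting the coefficients of $R(x,x_u')$, the term $\sum_{x'}P(x,x')h^*_\zeta(x')$ contracts, via the marginalization \eqref{e:hmid}, to $\sum_{x_u'}R(x,x_u')h_\zeta(x_u'\mid x)$, so the bracket in \eqref{e:AROE} becomes
\[
\zeta\util(x)+\sum_{x_u'}R(x,x_u')\Bigl[h_\zeta(x_u'\mid x)-\log\tfrac{R(x,x_u')}{R_0(x,x_u')}\Bigr].
\]
This is a Donsker--Varadhan / Gibbs variational problem over the simplex, whose maximizer is the tilted pmf $\cR_\zeta(x,x_u')\propto R_0(x,x_u')\exp(h_\zeta(x_u'\mid x))$ and whose optimal value is the log‑normalizer $\EFn{h_\zeta}(x)$ of \eqref{e:EFn}. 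Recombining with $Q_0$ through \eqref{e:PQ0R} yields exactly \eqref{e:Pzeta-a} — using $\sum_{x_n'}Q_0(x,x_n')=1$ to see that the normalizer built from $R_0$ agrees with the one built from $P_0$ — and reduces the AROE to the scalar fixed‑point equation
\[
\zeta\util(x)+\EFn{h_\zeta}(x)=h^*_\zeta(x)+\eta^*_\zeta,\qquad x\in\state.
\]

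For part~(ii), I would apply the implicit function theorem to this fixed‑point equation. Define $G(h,\eta,\zeta)(x)=\zeta\util(x)+\EFn{h}(x)-h(x)-\eta$ on the slice $\{h:h(\xz)=0\}\times\Re$, so that the $d$ scalar equations $G=0$ are square in the $d$ unknowns (the $d-1$ free coordinates of $h$ together with $\eta$). The one computation that matters is the Jacobian: differentiating the log‑sum‑exp in \eqref{e:EFn} gives $\partial\EFn{h}(x)/\partial h(y)=\cP_\zeta(x,y)$, so the derivative of $G$ in $(h,\eta)$ is the operator $(h,\eta)\mapsto(\cP_\zeta-I)h-\eta\mathbf{1}$. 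I would then argue this is nonsingular on the slice: because $\cP_\zeta$ is irreducible, $\cP_\zeta-I$ has kernel $\mathrm{span}(\mathbf{1})$ and range $\{v:\cpi_\zeta v=0\}$; the normalization $h(\xz)=0$ removes the kernel direction (as $\mathbf{1}(\xz)\neq 0$), while appending the column $-\mathbf{1}$ restores the single missing range direction (since $\cpi_\zeta\mathbf{1}=1\neq 0$). The IFT then produces a locally unique, continuously differentiable (indeed $C^\infty$) branch $\zeta\mapsto(h^*_\zeta,\eta^*_\zeta)$, and uniqueness of the normalized AROE solution lets these local branches be patched into a single global $C^1$ family. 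Differentiability of $\cP_\zeta$ follows by composing with \eqref{e:Pzeta-a}, and that of $\cpi_\zeta$ from smooth dependence of the invariant pmf on an irreducible matrix, e.g.\ by differentiating $\cpi_\zeta\cP_\zeta=\cpi_\zeta$, $\cpi_\zeta\mathbf{1}=1$ and inverting with the fundamental matrix \eqref{e:fundKernGen}.

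The main obstacle is the invertibility step in part~(ii): everything hinges on reading off the kernel and cokernel of $\cP_\zeta-I$ and checking that the normalization $h(\xz)=0$ and the extra scalar unknown $\eta$ exactly absorb those one‑dimensional defects. The variational computation in part~(i) is a routine Gibbs calculation; its only subtlety is that the maximizer remain in the interior of the support of $R_0(x,\varble)$ so that the reward is differentiable there, but this is automatic from the exponential form, which gives $\cR_\zeta(x,x_u')>0$ precisely when $R_0(x,x_u')>0$. Hence $\cP_\zeta$ is irreducible and $\cpi_\zeta>0$, which is exactly what is needed to close the invertibility argument.
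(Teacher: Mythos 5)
The genuine gap is at the start of your part (i), where you assert that a solution $(h^*_\zeta,\eta^*_\zeta)$ to the AROE \eqref{e:AROE} exists because ``every feasible stationary policy induces an irreducible chain by \eqref{e:PprecP}, so the model is unichain.'' That premise is false. Feasibility only requires the one-sided absolute continuity $P(x,\varble)\prec P_0(x,\varble)$; the two-sided equivalence \eqref{e:PprecP} is a property of the \emph{optimizers} --- it is part of what must be proved, and in the paper it follows from the exponential form \eqref{e:Pzeta-a} --- not a constraint defining the feasible set. Since the K-L cost is finite under mere absolute continuity, feasible actions include degenerate choices, e.g.\ $R(x,\varble)$ a point mass inside the support of $R_0(x,\varble)$, and such policies can create several absorbing classes. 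The MDP is therefore multichain, and the unichain existence theorem from \cite{put14} that you invoke does not apply; with it falls your identification of $\eta^*_\zeta$ with the value of \eqref{e:ARobjectiveReward} and the existence of the optimizers $\cpi_\zeta,\cP_\zeta$, since your derivation of both runs through the AROE. The paper avoids MDP existence theory altogether: \Prop{t:IDPexists} produces a primal optimizer with $\cpi_\zeta(x)>0$ by compactness and concavity of the convex program, and \Prop{t:AROEduality} obtains $h^*_\zeta$ as a Lagrangian dual optimizer --- no duality gap by Slater's condition, since $\Pi_0$ lies in the relative interior of the constraint set --- and then uses $\cpi_\zeta(x)>0$ for all $x$ to upgrade the averaged identity to the pointwise AROE. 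Alternatively, your step could be repaired by citing existence results for \emph{communicating} (rather than unichain) finite-state, compact-action MDPs, since the single feasible policy $R=R_0$ already makes all states communicate. As written, however, the existence step rests on a false premise and everything downstream depends on it.

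The rest of the proposal is correct and close to the paper. Your reduction of the inner maximization to a Gibbs/Donsker--Varadhan problem, with maximizer $\cR_\zeta\propto R_0\exp\bigl(h_\zeta(\varble\mid x)\bigr)$ and optimal value $\EFn{h_\zeta}(x)$, is exactly \Prop{t:IPDfull} via the paper's \Lemma{t:ConvexDual}, including the observation that the normalizer built from $R_0$ equals the one built from $P_0$. Your part (ii) is the paper's implicit-function-theorem argument in different coordinates: the paper eliminates $\eta$ by evaluating the fixed-point equation at $\xz$ and inverts the full $d\times d$ Jacobian $I-P_h+\One\otimes\nu$, with $\nu=P_h(\xz,\varble)$, via the power series of \Lemma{t:Zh}; you keep $\eta$ as an unknown on the slice $\{h:h(\xz)=0\}$ and invert by a kernel/cokernel count using irreducibility of $\cP_\zeta$. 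The two Jacobians agree, both arguments are valid, and both yield $C^1$ dependence of $(h^*_\zeta,\eta^*_\zeta)$, hence of $\cP_\zeta$ and (through the fundamental matrix) of $\cpi_\zeta$.
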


The fact that the domain of optimization   (\ref{e:PiRconstraintsA}--\ref{e:PiRconstraintsC})
 is compact was helpful in establishing the existence of an optimizer.  However,  the results in \Section{s:ode} require that the action space be an \textit{open} set.  To apply the results of \Section{s:ode} we can apply \Theorem{t:IPD}~(i),  which justifies the restriction \eqref{e:PprecP}.  The restricted action space is an open subset of  $\Re^m$ for some $m<d$. 
\spm{should give formula for the derivative of $\eta^*_\zeta$ here?  We have the sub-derivative result in the previous section, and a derivative result later.}

Representations for the derivatives in \Theorem{t:IPD}~(ii),  in particular the derivative of $\EFn{h_\zeta^*}$ with respect to $\zeta$,  lead to a representation for the ODE   used to compute the optimal  transition matrices $\{\cP_\zeta\}$.

\subsection{ODE Solution}
\label{s:IPD}

It is shown here that the assumptions of \Theorem{t:ODEforMDP} hold, and hence the relative value functions $\{h_\zeta^* : \zeta\in\Re\}$ can be obtained as the solution to an ODE.  

At the start of \Section{s:ode} is is assumed that the action space  is an open subset of Euclidean space,  and this assumption is required in  \Theorem{t:ODEforMDP}. This can be imposed without loss of generality since any optimizer satisfies \eqref{e:PprecP}.

It is convenient to generalize the problem slightly here.   Let $\{h_\zeta^\circ : \zeta\in\Re\}$ denote a family of functions on $\state$, continuously differentiable in the parameter $\zeta$.  They are not necessarily relative value functions, but we maintain the structure established in  \Theorem{t:IPD} for the  family of transition matrices.  Denote,
\begin{equation}
h_\zeta (x'_u\mid  x)   =  \sum_{x_n'} Q_0(x,x_n') h^\circ_\zeta(x_u',x_n'),  
\quad 
x\in\state,\ x_u'\in\stateu
\label{e:hmidcirc}
\end{equation}
and then define as in \eqref{e:Ph-a},
\begin{equation}
 P_\zeta(x,x')
\eqdef P_0(x,x')\exp\bigl(  h_\zeta(x'_u\mid  x )  -  \EFn{h_\zeta}(x)    \bigr)
\label{e:Pzeta}
\end{equation} 
The function   $\EFn{h_\zeta}\colon\state\to\Re$ is a normalizing constant, exactly as in \eqref{e:EFn}:
\[
    \EFn{h_\zeta^\circ}(x)     
\eqdef  \log\Bigl( \sum_{x'} P_0(x,x')\exp\bigl(  h_\zeta(x'_u\mid  x)     \bigr) \Bigr)
\]


We begin with a general method to construct  a family of functions $\{h_\zeta^\circ : \zeta\in\Re\}$ based on an ODE.     Using notation similar to  \Theorem{t:ODEforMDP}, the ODE is expressed,  
\begin{equation}
\ddzeta h_\zeta^\circ  =  \clV(h_\zeta^\circ)\,,\qquad \zeta\in\Re,
\label{e:hODE}
\end{equation}
with boundary condition $h_0^\circ\equiv 0$.  A particular instance of the method will result in $h_\zeta^\circ = h_\zeta^*$ for each $\zeta$.

 
 Assumed given is a mapping $\preclH$ from transition matrices to functions on $\state$.   Following this, the vector field $\clV$ is obtained through the following two steps:  For a function $h\colon\state\to\Re$,  
\begin{romannumx}
\item Define a new transition matrix via \eqref{e:Ph-a},
\begin{equation}
P_h(x,x') \eqdef P_0(x,x')\exp\bigl(  h(x_u' \mid x)  -  \EFn{h}(x)    \bigr),\quad x,x'\in\state,
\label{e:Ph}
\end{equation}
in which $  h(x_u' \mid x) = \sum_{x_n'} Q_0(x,x_n') h(x_u',x_n')$,  and $ \EFn{h}(x)  $ is a normalizing constant.

\item  Compute $\preH = \preclH(P_h)$, and define $\clV(h) = \preH$.
It is assumed   that the functional $\preclH$ is constructed so that  $ \preH(\xz)=0$ for any $P$.

\end{romannumx}


 
 \smallbreak

In \cite{busmey16v} the functional  $\preclH$ is designed to ensure desirable properties in the ``demand dispatch'' application that is the focus of that paper.  It is shown here that a particular choice of the function $\preclH$  will provide the solution to the collection of MDPs \eqref{e:ARobjectiveReward}.  Its domain will include only transition matrices that are irreducible and aperiodic.    For  any transition matrix  $P$ in this domain, the fundamental matrix $Z$ is obtained using \eqref{e:fundKernGen}, and then $\preH=\preclH(P)$ is defined as
\begin{equation}
\preH(x) = \sum_{x'} [ Z(x,x')-Z(\xz,x') ] \util (x'),\qquad x\in\state
\label{e:fishP}
\end{equation}
The function $\preH$   is a solution to Poisson's equation,
\begin{equation}
P \preH =\preH -\util +\meanutil
\label{e:fish}
\end{equation}
where $\meanutil $ (also written $\pi(\util)$) is the steady-state mean of $\util$:
\begin{equation}
 \meanutil  \eqdef \sum_x\pi(x) \util(x) 
\label{e:barutil}
\end{equation}

The proof of \Theorem{t:IDPODE}
  is given in the Appendix. 
\begin{theorem}
\label{t:IDPODE}
Consider the ODE 
\eqref{e:hODE}
with boundary condition $h_0^\circ\equiv 0$,
and with  $\preH=\preclH(P)$ defined using \eqref{e:fishP} for each transition matrix $P$
that is irreducible and aperiodic.

The solution to this ODE exists, and the resulting functions $\{ h^\circ_\zeta : \zeta\in\Re\}$    
coincide with the relative value functions $\{h^*_\zeta: \zeta\in\Re\}$.  Consequently,   $\cP_\zeta = P_{h_\zeta}$ for each $\zeta$.
\qed
\end{theorem}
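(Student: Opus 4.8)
The plan is to show that the vector field $\clV$ appearing in the ODE \eqref{e:hODE} coincides with the vector field of \Theorem{t:ODEforMDP} specialized to the present model; existence then follows because the relative value functions themselves form a solution, and coincidence follows from uniqueness of solutions of the ODE. The standing assumptions of \Section{s:ode} are already supplied by \Theorem{t:IPD}: the pair $(h^*_\zeta,\eta^*_\zeta)$ exists and is continuously differentiable in $\zeta$, the optimal transition matrix $\cP_\zeta$ is irreducible and aperiodic, and (after the reduction \eqref{e:PprecP}) the action space is an open subset of $\Re^m$. Since the derivative $\rewardDer(x,u)=\util(x)$ depends on neither $u$ nor $\zeta$, \Theorem{t:ODEforMDP}(iii) shows the ODE is homogeneous, matching the form \eqref{e:hODE}.

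First I would identify the policy produced by step~1 of the general construction. Fix $h\colon\state\to\Re$. Using the conditional-independence structure \eqref{e:PQ0R} together with \eqref{e:hmidcirc},
\[
\sum_{x'}P(x,x')h(x') = \sum_{x_u'} R(x,x_u')\, h(x_u'\mid x),
\]
so that the inner maximization in the AROE \eqref{e:AROE}, with reward \eqref{e:AROEcost}, reduces for each fixed $x$ to
\[
\max_{R}\Bigl\{ \sum_{x_u'} R(x,x_u')\, h(x_u'\mid x) - \sum_{x_u'} R(x,x_u')\log\frac{R(x,x_u')}{R_0(x,x_u')}\Bigr\}
\]
over pmfs $R(x,\varble)$. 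This is the Gibbs variational problem, whose maximizer is the exponential tilt $R(x,x_u')\propto R_0(x,x_u')\exp(h(x_u'\mid x))$, and whose optimal value is the log-normalizer $\EFn{h}(x)$ of \eqref{e:EFn}. Recombining with $Q_0$ through \eqref{e:PQ0R} (and $P_0(x,x')=R_0(x,x_u')Q_0(x,x_n')$) yields exactly the tilted matrix $P_h$ of \eqref{e:Ph}. Consequently the transition matrix of step~2 is $\cP = P_h$, and step~3 solves Poisson's equation $\util + P_h H = H + \meanutil$ with the normalization $H(\xz)=0$; by \eqref{e:fish} this solution is precisely $\preclH(P_h)$ as given by \eqref{e:fishP}. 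Thus the two constructions of $\clV(h)$ agree for every admissible $h$.

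It remains to check the boundary condition and to argue uniqueness. At $\zeta=0$ the objective \eqref{e:ARobjectiveReward} is maximized by $P=P_0$, giving $\eta^*_0=0$ and $h^*_0\equiv 0$, which matches the initial condition of \eqref{e:hODE}. By \Theorem{t:ODEforMDP}(i) the family $\{h^*_\zeta\}$ is a $C^1$ solution of $\ddzeta h=\clV(h)$, establishing existence. For coincidence I would invoke uniqueness: because $\exp(\varble)>0$, the tilt \eqref{e:Ph} preserves the support of $P_0$, so $P_h$ is irreducible and aperiodic whenever $P_0$ is; hence the fundamental matrix $Z=[I-P_h+1\otimes\pi_h]^{-1}$ exists and is a real-analytic function of the entries of $P_h$, which are themselves smooth functions of $h$. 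Therefore $\clV(h)=\preclH(P_h)$ is smooth, in particular locally Lipschitz, and the Picard--Lindel\"of theorem forces the solution of \eqref{e:hODE} to be unique; it must equal $\{h^*_\zeta\}$. Finally $\cP_\zeta=P_{h_\zeta}$ follows from \Theorem{t:IPD}(i). The main obstacle is this last step: one must guarantee that the trajectory never leaves the domain on which $\clV$ is defined and smooth, which is exactly what support preservation of the exponential tilt secures, keeping the fundamental-matrix inverse (and hence the Poisson solution) well-defined and Lipschitz along the entire solution.
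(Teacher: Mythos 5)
Your proof is correct, and it reaches the theorem by a genuinely different organization than the paper's. The paper's appendix argument never matches the two constructions of the vector field for arbitrary $h$; it works only along the optimal family: it differentiates the fixed-point equation \eqref{e:NNfixedPt} in $\zeta$, computes $\ddzeta \EFn{h^*_\zeta} = \cP_\zeta H^*_\zeta$ by the chain rule (\Lemma{t:LambdaFish}), recognizes the result as Poisson's equation \eqref{e:fisheta} for $\cP_\zeta$ with forcing $\util$, and then invokes uniqueness of the normalized Poisson solution to conclude $\ddzeta h^*_\zeta = \preclH(\cP_\zeta) = \clV(h^*_\zeta)$. You instead identify the vector field of \Theorem{t:ODEforMDP} with the map $h \mapsto \preclH(P_h)$ pointwise in $h$ via the Gibbs variational principle (\Lemma{t:ConvexDual}) --- the same duality the paper exploits in \Prop{t:IPDfull}, but applied off the optimal trajectory --- and then close the argument with Picard--Lindel\"of. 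That last step is a real addition rather than a repackaging: the paper's proof establishes only that $\{h^*_\zeta\}$ \emph{is a} solution of \eqref{e:hODE}, leaving implicit why an arbitrary solution of the ODE (say, one produced by a numerical integrator) must coincide with the value functions; your observation that the exponential tilt \eqref{e:Ph} preserves the support of $P_0$ --- so that $P_h$ is irreducible and aperiodic for every $h\in\Re^d$, the inverse defining the fundamental matrix exists (\Lemma{t:Zh}), and $\clV$ is smooth, hence locally Lipschitz, on all of $\Re^d$ --- is exactly what makes the word ``coincide'' rigorous. What the paper's route buys in exchange is economy: it needs derivative formulas only along the optimal family, never has to discuss regularity of $\clV$ away from that family, and the same chain-rule computation delivers $\ddzeta\eta^*_\zeta = \cpi_\zeta(\util)$ as a by-product.
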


We sketch here the main ideas of the proof of \Theorem{t:IDPODE}.  

The Implicit Function Theorem is used to establish differentiability of the relative value functions and average reward as a function of $\zeta$.
The ODE representation can then be obtained from
\Theorem{t:ODEforMDP}.

The next step is to establish the particular form for the ODE.
The statement of the theorem is equivalent to the  
 representation $  H^*_\zeta = \preclH(\cP_\zeta)$ for each $\zeta$, where $h^*_\zeta$ is the relative value function, 
$\cP_\zeta$ is defined in \eqref{e:ARobjective}, and
\begin{equation}
  H^*_\zeta =\ddzeta h^*_\zeta 
\label{e:Hrep}
\end{equation}
 The first step in the proof of \eqref{e:Hrep} is a fixed point equation that follows from the AROE.  The following identity is given in \Prop{t:IPDfull}:
\begin{equation}
 \zeta \util   + \EFn{h^*_\zeta} = h^*_\zeta + \eta^*_\zeta \, .
\label{e:NNfixedPt}
\end{equation} 
A representation for the derivative of the log moment generating function is obtained in \Lemma{t:LambdaFish},  
\[
\ddzeta  \EFn{h^*_\zeta}\, (x) = \sum_{x'} \cP_\zeta(x,x') H^*_\zeta (x')\, .
\] 
Differentiating each side of \eqref{e:NNfixedPt} then gives,
\begin{equation}
   \util   +  \cP_\zeta H^*_\zeta= H^*_\zeta + \ddzeta \eta^*_\zeta .
\label{e:fisheta}
\end{equation}
This is Poisson's equation, and it follows that $\cpi_\zeta(\util) =  \ddzeta \eta^*_\zeta $.  Moreover,  since $h^*_\zeta(\xz)=0$ for every $\zeta$,  we must have $H^*_\zeta(\xz)=0$ as well.  Since the solution to Poisson's equation with this normalization is unique, we conclude that  \eqref{e:Hrep} holds, and hence $H^*_\zeta = \preclH(\cP_\zeta)$ as claimed.

\section{Conclusions} 
\label{s:conc}

It is surprising that an MDP can be solved using an ODE under general conditions,  and fortunate that this ODE admits simple structure in the K-L cost framework that is a focus of the paper.   

It is likely that the ODE has special structure for other classes of MDPs, such as the  ``rational inattention'' framework of \cite{sim03,sim06,sharagmey13,sharagmey16}.  The computational efficiency of this approach  will depend in part on numerical properties of the ODE, such as its sensitivity for high dimensional models.   
\spm{dimension, or large state space -- this often confuses people}

 Finally, it is hoped that this approach will lead to new approaches to approximate dynamic programming or reinforcement learning.

\bigskip


\appendix
 
 \begin{center}
 \textbf{\large Appendices}
 \end{center}

\def\head#1{\smallbreak\noindent\textbf{#1}\quad }

\section{AROE and Duality}
 
Based on the linear programming (LP) approach to dynamic programming \cite{man60a,bor02a},  it can be shown that the AROE is the dual of the primal problem \eqref{e:ARobjectiveReward}.    The relative value function $h^*$ is the dual variable associated with the invariance constraint 
  $\pi=\pi P$ \cite{bor02a}.     To prove \Theorem{t:IPD} we require properties of the primal and dual.

The primal \eqref{e:ARobjectiveReward} is equivalently expressed, 
\begin{equation}
\eta_\zeta^* = 
\max_{\pi,R} \Bigl\{ \sum_x \pi(x)    \reward(x,R) \   :  \ \text{   \eqref{e:PQ0R} holds,  and  $\pi=\pi P$}\Bigr\}
\label{e:ARobjectiveRewardb}
\end{equation}
The AROE becomes,  
\begin{equation}
\max_R\Bigl\{  \reward(x,R) 
+\sum_{x'} P(x,x') h^*_\zeta(x') \Bigr\} = h^*_\zeta(x) + \eta^*_\zeta
\label{e:AROEb}
\end{equation}
It will be shown that \eqref{e:AROEb} can be interpreted as a dual of the convex program \eqref{e:ARobjectiveRewardb}.   We first characterize its optimizer, denoted $\cR_\zeta$.  This representation is based on the convex duality between K-L divergence and the log-moment generating function recalled in \Lemma{t:ConvexDual}.

Fix a  pmf $\mu_0$ on $\state$.  For any function $F\colon\state \to \Re$,  the log-moment generating function is denoted
\[
\Lambda(F) =\log\Bigl\{ \sum_x \mu_0(x) \exp(F(x)) \Bigr\}
\]
The mean of a function $F$ under an arbitrary pmf $\mu$ is denoted $\mu(F) = \sum_x \mu(x) F(x)$.  
The following  lemma can be regarded as a consequence of Kullback's inequality (see eqn (4.5) of \cite{kul54});  see also  Theorem 3.1.2  of  \cite{demzei98a}.
\spm{Let's do some more research on Kullback's inequality.  It seems some call this the DV lemma.  }

\begin{lemma}
\label{t:ConvexDual}
The log-moment generating function is the convex dual of relative entropy,
\[ 
\Lambda(F) = \max_\mu \{  \mu(F) - D(\mu\| \mu_0) \}
\]
where the maximum is over all pmfs, and is achieved uniquely with,
\[
 \mu^*(x) = \mu_0(x) \exp\{ F(x) - \Lambda(F) \},\qquad x\in\state.
\]
\qed
\end{lemma}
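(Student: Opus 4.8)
The plan is to verify directly that the proposed maximizer $\mu^*$ is admissible and attains the claimed value, and then to obtain both optimality and uniqueness from a single algebraic identity that rewrites the objective as a K-L divergence against $\mu^*$.

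First I would confirm that $\mu^*$ is a genuine pmf: summing $\mu^*(x)=\mu_0(x)\exp\{F(x)-\Lambda(F)\}$ over $x$ gives $\exp(-\Lambda(F))\sum_x \mu_0(x)\exp(F(x))=1$ by the definition of $\Lambda(F)$. Evaluating the objective at $\mu^*$, the logarithm appearing in $D(\mu^*\|\mu_0)$ is exactly $F(x)-\Lambda(F)$, so $D(\mu^*\|\mu_0)=\mu^*(F)-\Lambda(F)$, and hence $\mu^*(F)-D(\mu^*\|\mu_0)=\Lambda(F)$. This already shows the supremum is at least $\Lambda(F)$ and is attained at $\mu^*$.

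For the upper bound and uniqueness the key step is the identity, valid for any pmf $\mu$ with $\mu\prec\mu_0$,
\[
\Lambda(F)-\bigl[\mu(F)-D(\mu\|\mu_0)\bigr]=D(\mu\|\mu^*).
\]
I would derive this by expanding $D(\mu\|\mu^*)=\sum_x\mu(x)\log(\mu(x)/\mu^*(x))$ and substituting $\log\mu^*(x)=\log\mu_0(x)+F(x)-\Lambda(F)$, so that $\log(\mu(x)/\mu^*(x))=\log(\mu(x)/\mu_0(x))-F(x)+\Lambda(F)$; summing against $\mu$ (and using $\sum_x\mu(x)=1$) collapses the right side to $D(\mu\|\mu_0)-\mu(F)+\Lambda(F)$, which is exactly the left side above. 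Since K-L divergence is nonnegative (Gibbs' inequality, itself a consequence of Jensen applied to the convex function $-\log$), the left side is $\ge 0$, giving $\mu(F)-D(\mu\|\mu_0)\le\Lambda(F)$; strict convexity of $-\log$ forces equality precisely when $\mu=\mu^*$, which yields the uniqueness of the maximizer.

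The only point requiring care is the case $\mu\not\prec\mu_0$: there $D(\mu\|\mu_0)=+\infty$ while $\mu(F)$ is finite, so the objective equals $-\infty$ and such $\mu$ can never be optimal; the maximum may therefore be restricted to $\mu\prec\mu_0$ without loss. I expect the main obstacle to be merely bookkeeping — stating the nonnegativity-of-divergence step and its equality condition cleanly — since the substance of the argument is the short identity displayed above.
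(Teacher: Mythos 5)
Your proof is correct, and it supplies something the paper deliberately omits: the paper states this lemma with no proof at all, remarking only that it ``can be regarded as a consequence of Kullback's inequality (see eqn (4.5) of \cite{kul54}); see also Theorem 3.1.2 of \cite{demzei98a},'' and closing the statement with a \qed. Your argument is the standard self-contained derivation that underlies those citations. The tilting identity
\[
\Lambda(F)-\bigl[\mu(F)-D(\mu\|\mu_0)\bigr]=D(\mu\|\mu^*),\qquad \mu\prec\mu_0,
\]
reduces both the upper bound and the uniqueness claim to the single fact that K-L divergence is nonnegative and vanishes only when its two arguments coincide, and your treatment of $\mu\not\prec\mu_0$ (objective equal to $-\infty$) disposes of the remaining pmfs. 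One small point you use implicitly and could state explicitly: because the exponential tilt is strictly positive, $\mu^*$ has exactly the support of $\mu_0$, so $\mu\prec\mu_0$ is equivalent to $\mu\prec\mu^*$ and the term-by-term expansion of $D(\mu\|\mu^*)$ is legitimate. As for what each route buys: the paper's citation keeps the exposition short and situates the lemma in the large-deviations literature (it is the finite-alphabet Donsker--Varadhan/Gibbs variational formula), whereas your version makes the result self-contained at the cost of a few lines and has the advantage that the equality condition --- hence uniqueness of the maximizer, which a bare appeal to Kullback's inequality does not immediately deliver --- is explicit.
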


The following representation for $\cR_\zeta$ easily follows.  The fixed point equation appearing in \Prop{t:IPDfull} was previously stated in \eqref{e:NNfixedPt}.
\begin{proposition}
\label{t:IPDfull}
The control matrix maximizing the left hand side of 
\eqref{e:AROEb} 
is given by,
\begin{equation}
\cR_\zeta (x, x_u') = R_0(x,x_u') \exp \bigl( h^*_\zeta(x_u' \mid x) - \EFn{h^*_\zeta}(x) \bigr)\, .
\label{e:cR}
\end{equation}
Consequently,  the AROE is equivalent to the fixed point equation $
 \zeta \util   + \EFn{h^*_\zeta} = h^*_\zeta + \eta^*_\zeta $.
\end{proposition}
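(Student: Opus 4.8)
The plan is to apply the convex duality of Lemma~\ref{t:ConvexDual} pointwise, for each fixed state $x\in\state$, to the inner maximization over $R$ appearing on the left-hand side of \eqref{e:AROEb}. First I would fix $x$ and rewrite the objective $\reward(x,R) + \sum_{x'} P(x,x') h^*_\zeta(x')$ as a function of the row $R(x,\varble)$ alone. Using the conditional-independence structure \eqref{e:PQ0R}, the term $\sum_{x'} P(x,x') h^*_\zeta(x')$ collapses to $\sum_{x_u'} R(x,x_u') h^*_\zeta(x_u'\mid x)$, where $h^*_\zeta(x_u'\mid x) = \sum_{x_n'} Q_0(x,x_n') h^*_\zeta(x_u',x_n')$ is exactly the averaged value function defined in \eqref{e:hmid}. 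Combining this with the divergence representation of $\reward(x,R)$ in the second line of \eqref{e:AROEcost}, the quantity to be maximized over the pmf $R(x,\varble)$ on $\stateu$ becomes
\[
\zeta\util(x) + \sum_{x_u'} R(x,x_u')\, h^*_\zeta(x_u'\mid x) - \sum_{x_u'} R(x,x_u') \log\Bigl( \frac{R(x,x_u')}{R_0(x,x_u')} \Bigr)\, .
\]

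The key step is to recognize the last two sums as $\mu(F) - D(\mu\|\mu_0)$ in the notation of Lemma~\ref{t:ConvexDual}, with the identifications $\mu = R(x,\varble)$, the reference pmf $\mu_0 = R_0(x,\varble)$, and the test function $F = h^*_\zeta(\varble\mid x)$ on $\stateu$. Lemma~\ref{t:ConvexDual} then gives both the maximal value, namely $\zeta\util(x) + \Lambda(F) = \zeta\util(x) + \EFn{h^*_\zeta}(x)$ after matching the definition \eqref{e:EFn} of the normalizing constant, and the unique maximizer $\mu^*(x_u') = R_0(x,x_u')\exp\bigl(h^*_\zeta(x_u'\mid x) - \EFn{h^*_\zeta}(x)\bigr)$, which is precisely the claimed formula \eqref{e:cR} for $\cR_\zeta$. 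Substituting the maximal value back into the AROE \eqref{e:AROEb}, the left-hand side equals $\zeta\util(x) + \EFn{h^*_\zeta}(x)$ and the right-hand side is $h^*_\zeta(x) + \eta^*_\zeta$, which is exactly the fixed point equation \eqref{e:NNfixedPt}. This establishes both conclusions of the proposition at once.

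The only point requiring care — and the one I expect to be the main obstacle — is verifying that the log-moment generating function appearing in Lemma~\ref{t:ConvexDual}, built from the reference pmf $R_0(x,\varble)$ on $\stateu$, genuinely coincides with $\EFn{h^*_\zeta}(x)$, which is defined in \eqref{e:EFn} via a sum over $x'\in\state$ against $P_0(x,x')$. Here one must use that $P_0$ itself factors as $P_0(x,x') = R_0(x,x_u') Q_0(x,x_n')$ and that $\sum_{x_n'} Q_0(x,x_n') = 1$, so that summing $P_0(x,x')\exp(h^*_\zeta(x_u'\mid x))$ over all $x'$ reduces the $x_n'$-marginal to unity and leaves exactly $\sum_{x_u'} R_0(x,x_u')\exp(h^*_\zeta(x_u'\mid x))$; this is where the definition \eqref{e:hmid} of $h^*_\zeta(x_u'\mid x)$ as the $Q_0$-average is essential, since it is what makes the exponent independent of $x_n'$. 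Once this bookkeeping is settled the result is immediate, and the pointwise-in-$x$ application of the lemma poses no difficulty since the constraints on $R$ decouple across states.
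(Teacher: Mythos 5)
Your proposal is correct and follows essentially the same route as the paper's proof: rewrite the inner maximization via \eqref{e:AROEcost} and \eqref{e:hmid}, then apply \Lemma{t:ConvexDual} pointwise in $x$ with $\mu = R(x,\varble)$, $\mu_0 = R_0(x,\varble)$, $F = h^*_\zeta(\varble\mid x)$, reading off both the maximizer \eqref{e:cR} and the optimal value $\Lambda(F)$, which yields the fixed point equation. The one place you go beyond the paper is in explicitly verifying that $\Lambda(F)$ coincides with $\EFn{h^*_\zeta}(x)$ using the factorization $P_0(x,x') = R_0(x,x_u')Q_0(x,x_n')$ and $\sum_{x_n'} Q_0(x,x_n') = 1$ --- a bookkeeping step the paper leaves implicit, and a worthwhile one to record.
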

\begin{proof}  
Using \eqref{e:AROEcost}, the AROE becomes  $h^*_\zeta(x) + \eta^* =$
\[
\max_R\Bigl\{\reward(x,R) 
+\sum_{x_u', \, x_n'} R(x,x_u')Q_0(x,x_n')h^*_\zeta(x_u',x_n') \Bigr\} \, .
\]
Recalling the notation $ h^*_\zeta(x_u' \mid x) $ in \eqref{e:hmid},
we obtain $ h^*_\zeta(x) + \eta^* =$
 \begin{equation}
 \zeta \util(x)  +
\max_R\Bigl\{  \sum_{x_u'} R(x,x_u') h^*_\zeta(x_u' \mid x)  -   \sum_{x_u'} R(x,x_u')\log \Bigl(\frac{R(x,x'_u) }{R_0(x,x_u')} \Bigr)   \Bigr\} 
\label{e:AROEcb}
\end{equation}
  
For fixed $x$ denote $F(x_u') =  h^*_\zeta(x_u' \mid x) $,  $\mu_0(x_u') = R_0(x,x_u') $ and $\mu(x_u') = R(x,x_u') $,  $x_u'\in\stateu$.  The maximization variable in \eqref{e:AROEcb} is $\mu$,
and the maximization problem we must solve is,
\[
\max_\mu \{  \mu(F) - D(\mu\| \mu_0) \}
\]
The formula for the optimizer $ \mu^*$ in
\Lemma{t:ConvexDual} gives the expression for $\cR_\zeta$ in \eqref{e:cR}.

The fact that the optimal value is $\Lambda(F)$ implies the fixed point equation \eqref{e:NNfixedPt}.  
\end{proof}

It is established next that the AROE does indeed hold, by constructing a dual of \eqref{e:ARobjectiveRewardb} obtained though a relaxation of  the invariance constraint.  
A dual functional $\varphi^*_\zeta$ is defined for
 any function $h\colon\state\to\Re$ via
\[
\varphi^*_\zeta(h) =  \max_{\pi,R} \Bigl \{ \sum_x \pi(x) \bigl[ \reward(x,R)   + (P-I)h\, (x) \bigr]\Bigr\}
\]
where $(\pi,R)$ are now independent variables,  and $P$ is obtained from $R$ via   \eqref{e:PQ0R}.  
We have $\varphi^*_\zeta(h)\ge \eta_\zeta^*$ for any $h$, and there is no duality gap:  

\begin{proposition}
\label{t:AROEduality}
There exists $h^*_\zeta$ such that  $\varphi^*_\zeta(h^*_\zeta) = \eta_\zeta^*$.   
The pair $(h^*_\zeta,\eta_\zeta^*)$   is a solution to the AROE \eqref{e:AROE}.
\end{proposition}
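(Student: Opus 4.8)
The plan is to read $\varphi^*_\zeta$ as the Lagrangian dual of the concave program \eqref{e:ARobjectiveRewardb} in which the invariance constraint $\pi=\pi P$ (equivalently the linear constraint \eqref{e:PiRconstraintsC} in the variable $\Pi_{\pi,R}$) is relaxed, with the function $h$ acting as the vector of multipliers indexed by $x'\in\state$. A short computation confirms the identification: expanding $\sum_x\pi(x)(P-I)h\,(x)$ and using $P(x,x')=R(x,x_u')Q_0(x,x_n')$ reproduces exactly the multiplier pairing against \eqref{e:PiRconstraintsC}. The first step is \textbf{weak duality}: for any $h$ and any primal-feasible $(\pi,R)$ one has $\pi(P-I)h=0$ because $\pi=\pi P$, so the relaxed objective agrees with the primal objective on the feasible set; since $\varphi^*_\zeta(h)$ maximizes over the larger set obtained by dropping invariance, $\varphi^*_\zeta(h)\ge\eta^*_\zeta$ for every $h$.

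Next I would establish \textbf{strong duality} together with attainment of the dual minimum. \Prop{t:IDPexists} supplies the two ingredients needed for standard convex (LP) duality \cite{man60a,bor02a}: the objective is concave in $\Pi_{\pi,R}$ and the constraint set (\ref{e:PiRconstraintsA}--\ref{e:PiRconstraintsC}) is convex and compact, while the optimizer $\Pi^*=\cpi_\zeta\cR_\zeta$ with $\cpi_\zeta(x)>0$ furnishes a Slater-type interior point relative to the relaxed invariance constraint. This yields $\min_h\varphi^*_\zeta(h)=\eta^*_\zeta$ with the minimum attained at some $h^*_\zeta$, so that $\varphi^*_\zeta(h^*_\zeta)=\eta^*_\zeta$.

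It remains to recover the AROE from this saddle point. For fixed $h^*_\zeta$ the inner objective separates over states: both $\reward(x,R)$ (see \eqref{e:AROEcost}) and $(P-I)h^*_\zeta\,(x)$ depend on $R$ only through the row $R(x,\cdot)$, so maximizing over $R$ row by row gives, for each $x$, the quantity
\[
G(x)\eqdef\max_R\Bigl\{\reward(x,R)+\sum_{x'}P(x,x')h^*_\zeta(x')\Bigr\}-h^*_\zeta(x),
\]
and maximizing the resulting linear functional $\sum_x\pi(x)G(x)$ over pmfs $\pi$ gives $\varphi^*_\zeta(h^*_\zeta)=\max_x G(x)$. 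The no-gap identity then forces $\max_x G(x)=\eta^*_\zeta$, which is the $\le$ direction of \eqref{e:AROEb} at every state. To upgrade this to equality I would invoke complementary slackness: the saddle-point property guarantees that the primal optimizer $(\cpi_\zeta,\cR_\zeta)$ also attains the inner maximum defining $\varphi^*_\zeta(h^*_\zeta)$, so the $\pi$-maximizer $\cpi_\zeta$ is supported on $\argmax_x G(x)$; since $\cpi_\zeta(x)>0$ for all $x$ by \Prop{t:IDPexists}, we conclude $G(x)=\eta^*_\zeta$ for every $x$. Identifying the maximizing row via \Lemma{t:ConvexDual} (exactly as in \Prop{t:IPDfull}) then shows that $(h^*_\zeta,\eta^*_\zeta)$ solves the AROE \eqref{e:AROE}.

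The main obstacle is the final upgrade from \textquotedblleft$G(x)\le\eta^*_\zeta$ for all $x$, with equality in the $\cpi_\zeta$-average\textquotedblright\ to \textquotedblleft$G(x)=\eta^*_\zeta$ for all $x$\textquotedblright; this is precisely where the strict positivity $\cpi_\zeta(x)>0$ is indispensable, and it is also the point at which strong duality (rather than mere weak duality) must be used.
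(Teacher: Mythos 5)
Your proposal is correct and follows essentially the same route as the paper's proof: weak duality from $\pi(P-I)h=0$ on the feasible set, strong duality via concavity of the primal (\Prop{t:IDPexists}) together with Slater's condition, reduction of the dual objective at $h^*_\zeta$ to $\max_x G(x)$ by exchanging the maximum over pmfs $\pi$ for a maximum over states, and finally the full-support/complementary-slackness argument that upgrades equality of the $\cpi_\zeta$-average to equality at every state, with \Lemma{t:ConvexDual} (through \Prop{t:IPDfull}) identifying the maximizing row and hence the AROE.

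The one place where your justification is weaker than the paper's is the choice of Slater point. You propose the optimizer $\Pi^*=\cpi_\zeta\cR_\zeta$, but \Prop{t:IDPexists} only asserts $\cpi_\zeta(x)>0$; it does not assert that $\cR_\zeta(x,\cdot)$ has full support relative to $R_0(x,\cdot)$, so at this stage $\Pi^*$ is not known to lie in the relative interior of the set defined by (\ref{e:PiRconstraintsA})--(\ref{e:PiRconstraintsB}), which is what the (refined) Slater condition requires after the invariance constraint \eqref{e:PiRconstraintsC} is dualized. The paper instead uses the nominal $\Pi_{\pi_0,R_0}$, which satisfies invariance (since $\pi_0P_0=\pi_0$) and is trivially in the relative interior because its support is exactly that of $\Pi_0$; substituting this point makes your strong-duality step airtight with no other change to the argument.
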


\begin{proof} 
To show that there is no duality gap we apply    \Prop{t:IDPexists}, which establishes that the primal is a convex program,  and hence a sufficient condition is  Slater's condition \cite[Section~5.3.2]{boyvan04a}.  This condition holds because $\Pi_{\pi_0,R_0}$ is in the relative interior of the constraint-set for the primal.

Since there is no duality gap, it then follows that there exists a maximizer for $\varphi^*_\zeta$,  denoted $h^*_\zeta$, which satisfies $\eta^*_\zeta = \varphi^*_\zeta(h^*_\zeta) $.
To obtain the AROE, consider this representation:  $\eta^*_\zeta = $
\[
 \max_{\pi} \Bigl\{  \sum_x \pi(x)  \max_R \bigl[ \reward(x,R)   + Ph^*_\zeta\, (x) - h^*_\zeta(x)  \bigr] \Bigr\}
\]

The maximum over pmfs $\pi$ is the same as the maximum over $x$:
\[ 
\eta^*_\zeta = \max_x  \max_R \bigl\{  \reward(x,R)   + Ph^*_\zeta\, (x) - h^*_\zeta(x)  \bigr\}  
\]
To complete the proof we must remove the maximum over $x$.  For this, recall that $\pi_0$ and hence   $\cpi_\zeta$ have full support (they are strictly positive on all of $\state$).
 
\Prop{t:IPDfull} implies that the maximum over $R$ is uniquely given by  $\cR_\zeta$ in  \eqref{e:cR}, so that
\[ 
\eta^*_\zeta 
 = \max_x  \bigl\{  \reward(x,\cR_\zeta)   + \cP_\zeta h^*_\zeta\, (x) - h^*_\zeta(x)  \bigr\}  
\] 
Averaging over the optimizing  pmf $\cpi_\zeta$ gives, by invariance,
\[
\begin{aligned}
  \eta^*_\zeta 
&=  \sum_x \cpi_\zeta(x)    \reward(x,\cR_\zeta)  
\\
& =
  \sum_x \cpi_\zeta(x) \bigl\{  \reward(x,\cR_\zeta)   + \cP_\zeta h^*_\zeta\, (x) - h^*_\zeta(x)  \bigr\}  .
\end{aligned}
\]
Because  $ \cpi_\zeta(x) >0$ for every $x$,
it follows that the AROE \eqref{e:AROEb} holds: 
\[ 
\begin{aligned}
\eta^*_\zeta &= \reward(x,\cR_\zeta)   + \cP_\zeta h^*_\zeta\, (x) - h^*_\zeta(x) 
\\
&=   \max_R \bigl\{  \reward(x,R)   + Ph^*_\zeta\, (x) - h^*_\zeta(x)  \bigr\}    
\end{aligned}
 \]
\end{proof}

\section{Derivatives}

The proof of Part~(ii) of \Theorem{t:IPD} is obtained through a sequence of lemmas.    We first obtain an alternative representation for the fixed point equation
\eqref{e:NNfixedPt}.  Evaluating this equation at $\xz$,  and recalling that $h^*_\zeta(\xz)=0$ gives,
\begin{equation}
\eta^*_\zeta = \zeta \util(\xz)   + \EFn{h^*_\zeta} (\xz)
\label{e:etaForFP}
\end{equation}
Let $\One$ denote the function on $\state$ that is identically equal to $1$, and for any function $h$ and $\zeta\in\Re$ define a new function on $\state$ via
\begin{equation}
\clF(\zeta,  h)
\eqdef 
h - \zeta \util   - \EFn{h}  + [\zeta \util(\xz)   + \EFn{h } (\xz)] \One
\label{e:Ffix}
\end{equation} 
The fixed point equation becomes \eqref{e:NNfixedPt} becomes,
\begin{equation}
\clF(\zeta, h^*_\zeta) =0.  
\label{e:NNfixedPt-b}
\end{equation}

\spm{I have a pdf version of the textbook looste68.  See commented text for the link}

The proof of \Theorem{t:IPD} will require the Implicit Function Theorem.
The following version of this result is taken from \cite[Theorem 11.5]{looste68}.
\begin{proposition}[Implicit Function Theorem]
\label{t:looste68}
Suppose that $A\subset \Re^n$ and $B\subset \Re^m$ are open, and that $F\colon A\times B \to \Re^m$ is  continuously differentiable.  Suppose moreover that there exists $(x^0,y^0)\in 
A\times B$ for which the following hold:   $F(x^0,y^0) =0$,   and the matrix $\partial /\partial y F\, (x^0,y^0)$ has rank $m$.   

Then, there is a ball $O\subset A$ about $x^0$ and a continuously differentiable 
function $g\colon O\to B$ such the equation
  $F(x, y)=0$ is uniquely determined by $y=g(x)$,  for each $x\in O$.  
\qed
\end{proposition}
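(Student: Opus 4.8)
The plan is to prove this via the Banach contraction mapping principle, which keeps the argument self-contained (the statement is essentially equivalent to the Inverse Function Theorem, so either could be derived from the other). Since $\partial/\partial y\, F(x^0,y^0)$ is an $m\times m$ matrix of rank $m$, it is invertible; write $M = \partial/\partial y\, F(x^0,y^0)$. After translating coordinates we may assume $(x^0,y^0)=(0,0)$. The key reformulation is that, for fixed $x$, solving $F(x,y)=0$ is equivalent to finding a fixed point of the map $T_x(y) = y - M^{-1}F(x,y)$, since $M^{-1}$ is invertible.

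First I would localize the contraction. Differentiating gives $\partial/\partial y\, T_x(y) = I - M^{-1}\,\partial/\partial y\, F(x,y)$, which vanishes at $(0,0)$. Because $F$ is continuously differentiable, $\partial/\partial y\, T_x$ is continuous, so there are radii $r,\delta>0$, with $\bar B_r\subset B$, such that $\|\partial/\partial y\, T_x(y)\|\le \tfrac12$ whenever $\|x\|<\delta$ and $\|y\|\le r$; by the mean value inequality $T_x$ is then Lipschitz in $y$ with constant $\tfrac12$ on $\bar B_r$. Shrinking $\delta$ if needed, the bound $\|T_x(0)\| = \|M^{-1}F(x,0)\|\le r/2$ holds for $\|x\|<\delta$ (using $F(0,0)=0$ and continuity), whence $\|T_x(y)\|\le \|T_x(y)-T_x(0)\|+\|T_x(0)\|\le \tfrac12\|y\|+\tfrac r2\le r$, so $T_x$ maps $\bar B_r$ into itself.

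Next I would apply the contraction mapping theorem on $\bar B_r$ for each fixed $x$ with $\|x\|<\delta$, obtaining a unique fixed point that I define to be $g(x)$. By construction $F(x,g(x))=0$, and $g(x)$ is the unique solution of $F(x,y)=0$ with $\|y\|\le r$; taking $O = \{\,\|x\|<\delta\,\}$ yields the ``uniquely determined'' claim. Continuity of $g$ follows from the uniform-in-$x$ contraction together with the continuity of $x\mapsto T_x(y)$: subtracting the two fixed-point identities gives $\|g(x)-g(x')\|\le \tfrac12\|g(x)-g(x')\| + \|T_x(g(x'))-T_{x'}(g(x'))\|$, and the last term tends to $0$ as $x'\to x$.

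The main obstacle, and the only genuinely delicate step, is upgrading $g$ from continuous to continuously differentiable. Here I would shrink $r,\delta$ further so that $\partial/\partial y\, F(x,g(x))$ remains invertible on $O$ (possible by continuity, as it is invertible at the base point), and then read off the candidate derivative by formally differentiating the identity $F(x,g(x))\equiv 0$, namely $Dg(x) = -[\partial/\partial y\, F(x,g(x))]^{-1}\,\partial/\partial x\, F(x,g(x))$. I would confirm this is the derivative by writing the first-order Taylor expansion of $F$ at $(x,g(x))$, substituting $y=g(x+\Delta x)$, using $F(x+\Delta x,g(x+\Delta x))=0$ and the established continuity of $g$ to bound the remainder, and solving for $g(x+\Delta x)-g(x)$. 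Continuity of $Dg$ is then immediate, since the displayed formula is a composition of the continuous maps $g$, $\partial F$, and matrix inversion (continuous on the invertible matrices), completing the construction of the $C^1$ implicit function $g$ on $O$.
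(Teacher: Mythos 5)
Your proof is correct, but there is nothing in the paper to compare it against: the paper states this proposition as a quoted classical result, taken from \cite[Theorem~11.5]{looste68}, and supplies no argument of its own (hence the proof-box immediately after the statement). Your contraction-mapping argument is the standard self-contained route and is essentially the same family of proof as in the cited textbook, except that Loomis--Sternberg derive the implicit function theorem from the inverse function theorem (applied to the map $(x,y)\mapsto (x,F(x,y))$), whereas you run the fixed-point iteration $T_x(y)=y-M^{-1}F(x,y)$ directly, which is slightly leaner and avoids the detour. Two fine points worth making explicit in a written version. First, the mean value inequality you invoke for $T_x$ on $\bar{B}_r$ is the vector-valued form (bounding by the supremum of the operator norm of the derivative along the segment), which is valid because $\bar{B}_r$ is convex. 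Second, in the $C^1$ step you say the "established continuity of $g$" bounds the remainder; continuity alone guarantees the Taylor remainder is $o(\|\Delta x\|+\|\Delta y\|)$, but to solve for $\Delta y = g(x+\Delta x)-g(x)$ you must first absorb the $o(\|\Delta y\|)$ part, i.e., establish a local Lipschitz bound $\|\Delta y\|\le C\|\Delta x\|$. This is in fact immediate from your own continuity estimate, since $\|T_x(g(x'))-T_{x'}(g(x'))\|\le \|M^{-1}\|\,\sup\|\partial F/\partial x\|\,\|x-x'\|$ on the compact neighborhood, so $g$ is locally Lipschitz with constant $2\|M^{-1}\|\sup\|\partial F/\partial x\|$; with that in hand your Taylor argument closes cleanly. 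Finally, note that the uniqueness you obtain is local (within $\bar{B}_r$), which is the correct reading of the proposition's loosely worded "uniquely determined" clause --- uniqueness in all of $B$ would be false in general --- and it is exactly the form of uniqueness the paper needs when it applies the result to the fixed point equation $\clF(\zeta,h^*_\zeta)=0$.
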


To apply \Prop{t:looste68}, we take $F=\clF$ and $(x,y)=(\zeta, h)$,  so that $n=1$ and $m=d$.
We apply the result to any $(\zeta_0,h^*_{\zeta_0})$ to establish that the mapping $\zeta\to h^*_\zeta$ is $C^1$.

For this
 we require a representation for the derivative of $\clF$ with respect to the variable $h$.  The derivative is represented as a $d\times d$ matrix, defined so that for any function $g\colon\state\to\state$,
\[
\clF(\zeta, h+\epsy g) \big|_x
=
\clF(\zeta,  h) \big|_x+ 
\epsy 
\sum_{x'\in\state}
 \frac{\partial}{\partial h} \clF\, (\zeta, h) \Bigr|_{x,x'} g(x') +o(\epsy)
\]
The following follows from \eqref{e:Ffix}
 and elementary calculus:
\begin{lemma}
\label{t:Fderh}
The function $\clF$ is continuously differentiable in $(\zeta, h)$. 
The partial derivative with respect to the second variable is,
\[
 \frac{\partial}{\partial h} \clF\, (\zeta, h)  = I - P_h + \One\otimes\nu , 
\]
in which $P_h$ is the transition matrix defined   in \eqref{e:Ph},
and $ \One\otimes \nu $ represents a $d\times d$ matrix with each row equal to $\nu$, and with
$
\nu(x) = P_h(\xz,x),\quad x\in\state$.
\qed
\end{lemma}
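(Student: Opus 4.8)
The plan is to read off the structure of $\clF$ in \eqref{e:Ffix}: it is assembled from the linear map $h\mapsto h$, the term $-\zeta\util$ that does not depend on $h$, and two appearances of the log-moment generating function $\EFn{h}$ (once as the function $\EFn{h}$, once as the scalar $\EFn{h}(\xz)$ multiplying $\One$). Consequently the entire content of the lemma reduces to two tasks: (a) verifying joint $C^1$-regularity, and (b) computing the single partial derivative $\partial\EFn{h}(x)/\partial h(y)$. Everything else is linear, so it is assembled by the sum and chain rules.

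For regularity I would note that $\EFn{h}(x) = \log\bigl(\sum_{x'}P_0(x,x')\exp(h(x_u'\mid x))\bigr)$ is the composition of the affine map $h\mapsto h(\,\cdot\mid x)$ with a log-sum-exp. Since the argument of the logarithm is strictly positive and bounded away from $0$, $\EFn{h}$ is $C^\infty$ in $h$, and it carries no $\zeta$-dependence. As $\zeta$ enters $\clF$ only affinely through $-\zeta\util + \zeta\util(\xz)\One$, the map $(\zeta,h)\mapsto\clF(\zeta,h)$ is jointly continuously differentiable.

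The key step is the gradient of the partition function. Writing $y=(y_u,y_n)$ and differentiating the affine inner term gives $\partial_{h(y)} h(x_u'\mid x)=Q_0(x,y_n)\ind\{x_u'=y_u\}$. The chain rule through the logarithm then yields
\[
\frac{\partial}{\partial h(y)}\EFn{h}(x)
= e^{-\EFn{h}(x)}\sum_{x'}P_0(x,x')\,e^{h(x_u'\mid x)}\,Q_0(x,y_n)\ind\{x_u'=y_u\}.
\]
Collapsing the sum over $x'=(x_u',x_n')$ against the indicator, and using the factorization $P_0(x,x')=R_0(x,x_u')Q_0(x,x_n')$ from \eqref{e:PQ0R} together with $\sum_{x_n'}Q_0(x,x_n')=1$, the right-hand side simplifies to $R_0(x,y_u)Q_0(x,y_n)\,e^{h(y_u\mid x)-\EFn{h}(x)}=P_h(x,y)$. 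Thus $\partial\EFn{h}(x)/\partial h(y)=P_h(x,y)$: the gradient of the log-moment generating function is exactly the row $P_h(x,\,\cdot\,)$ of the tilted matrix \eqref{e:Ph}.

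With this in hand the four terms of \eqref{e:Ffix} assemble immediately: $h$ contributes $I$; $-\EFn{h}$ contributes $-P_h$; the term $-\zeta\util$ (constant in $h$) contributes $0$; and $[\zeta\util(\xz)+\EFn{h}(\xz)]\One$ contributes a matrix whose $(x,y)$ entry is $P_h(\xz,y)$ for every $x$, i.e.\ $\One\otimes\nu$ with $\nu(y)=P_h(\xz,y)$. Summing gives $\partial\clF/\partial h = I - P_h + \One\otimes\nu$, as claimed. I expect the only real obstacle to be the bookkeeping in the gradient computation: one must check that the two-level structure—the averaging $h(x_u'\mid x)$ over the nature coordinate, composed with the factorization $P_0=R_0Q_0$—collapses to exactly $P_h(x,y)$ rather than to an unnormalized variant. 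Once the normalization $e^{-\EFn{h}(x)}$ is tracked correctly, the identification is routine.
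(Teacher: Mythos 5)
Your proposal is correct and follows exactly the route the paper intends: the paper states this lemma with no written proof beyond ``follows from \eqref{e:Ffix} and elementary calculus,'' and your computation---differentiating the log-partition function $\EFn{h}$ through the affine map $h\mapsto h(\,\cdot\mid x)$, using the factorization $P_0=R_0Q_0$ and $\sum_{x_n'}Q_0(x,x_n')=1$ to identify the gradient with the row $P_h(x,\,\cdot\,)$, then assembling the four terms of \eqref{e:Ffix}---is precisely that elementary calculus spelled out. In particular you correctly handled the one nontrivial bookkeeping point, namely that the nature-averaged exponent collapses against the indicator to give the normalized tilted matrix $P_h(x,y)$ rather than an unnormalized variant.
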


Invertibility of the derivative with respect to $h$ is obtained in the following:

\begin{lemma}
\label{t:Zh} 
The following inverse exists as a power series,
\[
Z_h =[I - P_h + \One\otimes\nu]^{-1}
= \sum_{n=0}^\infty  (P_h  - \One\otimes \nu  )^n
\]
in which $\nu$ is defined in \Lemma{t:Fderh}.
Moreover,   $  \nu Z_h$ is the unique invariant pmf for $P_h$. 
\end{lemma}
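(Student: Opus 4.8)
The plan is to exploit the fact that $P_h$ is a finite, irreducible and aperiodic stochastic matrix, so that its powers converge geometrically to its unique invariant pmf, and then to reduce the whole computation to a telescoping identity. First I would record two elementary facts: since $P_h$ is stochastic, $P_h\One = \One$; and since $\nu(x)=P_h(\xz,x)$ is a row of $P_h$, it is a pmf, so $\nu\One = 1$. Writing $M = P_h - \One\otimes\nu$, the key step is to prove by induction the modified fundamental-matrix identity
\[
M^n = P_h^n - \One\otimes(\nu P_h^{n-1}),\qquad n\ge 1.
\]
The base case $n=1$ is immediate, and the inductive step follows by expanding $M^{n+1}=M^nM$ and using $P_h^k\One=\One$ and $\nu\One=1$: two of the three rank-one cross terms collapse to $\One\otimes\nu$ and cancel, leaving the stated form.

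With this identity in hand, convergence is routine. Because $P_h$ is irreducible and aperiodic (a consequence of \eqref{e:PprecP}, since the exponential factor in \eqref{e:Ph} is strictly positive wherever $P_0$ is positive), we have $P_h^n\to\One\otimes\pi_h$ geometrically fast, where $\pi_h$ is the unique invariant pmf; likewise $\nu P_h^{n-1}\to\pi_h$, since $\nu$ is a pmf and $\nu P_h^{n-1}$ is the distribution after $n-1$ steps started from $\nu$. Substituting into the identity gives $M^n\to \One\otimes\pi_h - \One\otimes\pi_h = 0$ geometrically, so $\sum_{n\ge 0}M^n$ converges absolutely. Passing to the limit in $(I-M)\sum_{n=0}^N M^n = I - M^{N+1}$ then shows that this sum is $[I-P_h+\One\otimes\nu]^{-1}=Z_h$, which is the first assertion.

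For the invariant-pmf claim I would left-multiply the series by $\nu$ and telescope. Using $\nu\One=1$, the identity gives $\nu M^n = \nu P_h^n - \nu P_h^{n-1}$ for $n\ge 1$, so
\[
\nu\sum_{n=0}^N M^n = \nu + \sum_{n=1}^N\bigl(\nu P_h^n - \nu P_h^{n-1}\bigr) = \nu P_h^N ,
\]
which tends to $\pi_h$ as $N\to\infty$. Hence $\nu Z_h = \pi_h$, the unique invariant pmf of $P_h$ (uniqueness coming from irreducibility). The only genuine subtlety — the main obstacle — is that $\nu$ is a \emph{row} of $P_h$ rather than its invariant pmf, so the textbook fundamental-matrix algebra, in which $(P-\One\otimes\pi)^n = P^n-\One\otimes\pi$, does not apply verbatim. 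The modified identity above, together with the telescoping, is precisely what repairs this and simultaneously identifies $\nu Z_h$ as the stationary distribution.
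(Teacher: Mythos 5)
Your proof is correct, and its core---the induction identity $(P_h-\One\otimes\nu)^n = P_h^n - \One\otimes(\nu P_h^{n-1})$ together with geometric convergence of $P_h^n$ to $\One\otimes\pi_h$---is exactly the paper's argument for the inverse formula. Where you genuinely depart from the paper is in the second claim, $\nu Z_h = \pi_h$. The paper proceeds algebraically: it first notes $(P_h-\One\otimes\nu)^n\One = 0$ for $n\ge 1$, hence $Z_h\One=\One$; then from the identity $Z_h(P_h - \One\otimes\nu) = Z_h - I$ and left-multiplication by $\nu$ it obtains $\nu Z_h P_h = \nu Z_h$, so that $\nu Z_h$ is invariant, and finally it checks the normalization $\nu Z_h\One = \nu(\state) = 1$ so that uniqueness of the normalized invariant pmf identifies $\nu Z_h = \pi_h$. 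Your telescoping computation
\[
\nu\sum_{n=0}^N (P_h-\One\otimes\nu)^n \;=\; \nu P_h^N \;\longrightarrow\; \pi_h
\]
reaches the same conclusion more directly: it exhibits $\nu Z_h$ as a limit of marginal distributions started from $\nu$, so invariance and normalization never need to be verified separately (irreducibility enters only in naming the limit $\pi_h$). The trade-off is minor: your finish is leaner and self-contained, while the paper's algebraic route isolates the identity $Z_h(P_h-\One\otimes\nu)=Z_h-I$, which makes the invariance of $\nu Z_h$ visible without reference to the limiting behavior of the series. Both yield the representation $\pi_h=\nu Z_h$ that is later used to deduce smoothness of $\cpi_\zeta$ in the proof of \Theorem{t:IPD}.
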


\begin{proof} 
It is easily established by induction that for each $n\ge 1$,
\[
 (P_h  - \One\otimes \nu  )^n = P_h^n - \One\otimes\nu_n,
\]
where $\nu_n= \nu P_h^{n-1}$. 
Recall that $P_h$ is irreducible and aperiodic since this is true for $P_0$.  Consequently, as $n\to\infty$ we have $\nu_n\to\pi_h$ and $ P_h^n\to \One\otimes\pi_h$, where $\pi_h$ is invariant for $P_h$.  The convergence is geometrically fast,  which establishes the desired inverse formula.

From the foregoing we have $ (P_h  - \One\otimes \nu  )^n\One = P_h^n\One  - \One = 0$ for $n\ge 1$, which implies that $Z_h\One=\One$.  
From this we obtain,
\[
Z_h P_h - \One\otimes\nu = Z_h(P_h  - \One\otimes \nu  ) = Z_h - I
\]
Multiplying each side by $\nu$ gives $ \nu Z_h P_h= \nu Z_h$, so that $\mu_h\eqdef \nu Z_h$ is invariant.  We also have $\mu_h(\state) =  \nu Z_h \One = \nu(\state)=1$,   where we have used again the identity $Z_h\One=\One$.  Hence $\mu_h = \pi_h$ as claimed.
\end{proof}

Since $h^*_\zeta$ is continuously differentiable in $\zeta$, it follows from
\eqref{e:NNfixedPt}
that the same is true for $\eta^*_\zeta$.  The following result provides a representation.   The formula for
$\ddzeta \eta_\zeta^*$ could be anticipated from
\Prop{t:etaConvex}.

\begin{lemma}
\label{t:LambdaFish} 
For each $\zeta$ we have,
\[
\ddzeta \EFn{h^*_\zeta}  = \cP_\zeta H^*_\zeta\, (x)
\]
where $H^*_\zeta =\ddzeta h^*_\zeta$,  and
\[
\ddzeta \eta_\zeta^* = \cpi_\zeta(\util)
\]
\end{lemma}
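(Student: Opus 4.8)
The plan is to prove the two claimed identities in sequence, obtaining the second from the first. Throughout I would take for granted — as the text immediately preceding the lemma already asserts — that $h^*_\zeta$ is continuously differentiable in $\zeta$, so that $H^*_\zeta=\ddzeta h^*_\zeta$ is well defined and the differentiations below are legitimate. First I would establish the formula for $\ddzeta \EFn{h^*_\zeta}$ by directly differentiating the log-moment generating function \eqref{e:EFn} evaluated at $h=h^*_\zeta$. Differentiating the logarithm of a sum of exponentials produces a normalized weighted average; the chain rule gives
\[
\ddzeta \EFn{h^*_\zeta}(x) = \frac{\sum_{x'} P_0(x,x')\exp\bigl(h^*_\zeta(x'_u\mid x)\bigr)\,\ddzeta h^*_\zeta(x'_u\mid x)}{\sum_{x'} P_0(x,x')\exp\bigl(h^*_\zeta(x'_u\mid x)\bigr)}\,.
\]
The denominator is exactly $\exp(\EFn{h^*_\zeta}(x))$, so each summand in the numerator, divided by the denominator, is $\cP_\zeta(x,x')$ by the definition \eqref{e:Pzeta-a}. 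This yields $\ddzeta \EFn{h^*_\zeta}(x)=\sum_{x'}\cP_\zeta(x,x')\,\ddzeta h^*_\zeta(x'_u\mid x)$.

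The delicate point — and the step I expect to require the most care — is reconciling $\ddzeta h^*_\zeta(x'_u\mid x)$, which by \eqref{e:hmid} depends on $x'$ only through its controllable coordinate $x_u'$, with the full inner product $\cP_\zeta H^*_\zeta(x)=\sum_{x'}\cP_\zeta(x,x')H^*_\zeta(x')$ appearing in the claim. Here I would invoke the conditional-independence factorization $\cP_\zeta(x,x')=\cR_\zeta(x,x_u')Q_0(x,x_n')$ implied by \eqref{e:PQ0R} and \eqref{e:cR}, together with $\sum_{x_n'}Q_0(x,x_n')=1$. Differentiating \eqref{e:hmid} gives $\ddzeta h^*_\zeta(x_u'\mid x)=\sum_{x_n'}Q_0(x,x_n')H^*_\zeta(x_u',x_n')$. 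Summing $\cP_\zeta(x,x')\,\ddzeta h^*_\zeta(x'_u\mid x)$ over $x'=(x_u',x_n')$, and separately expanding $\sum_{x'}\cP_\zeta(x,x')H^*_\zeta(x')$, both collapse via the $Q_0$ factor to the common quantity $\sum_{x_u'}\cR_\zeta(x,x_u')\sum_{x_n'}Q_0(x,x_n')H^*_\zeta(x_u',x_n')$. This establishes the first identity $\ddzeta \EFn{h^*_\zeta}=\cP_\zeta H^*_\zeta$.

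Finally, for $\ddzeta \eta^*_\zeta=\cpi_\zeta(\util)$ I would differentiate the fixed point equation \eqref{e:NNfixedPt}, namely $\zeta\util+\EFn{h^*_\zeta}=h^*_\zeta+\eta^*_\zeta$, with respect to $\zeta$. Substituting the first identity converts this into Poisson's equation $\util+\cP_\zeta H^*_\zeta=H^*_\zeta+\ddzeta\eta^*_\zeta$, which is \eqref{e:fisheta}. Left-multiplying by the invariant pmf $\cpi_\zeta$ and using the invariance $\cpi_\zeta\cP_\zeta=\cpi_\zeta$ cancels the two $H^*_\zeta$ contributions, leaving $\cpi_\zeta(\util)=\ddzeta\eta^*_\zeta$, as required.
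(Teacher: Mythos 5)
Your proof is correct and follows essentially the same route as the paper: the first identity is the direct differentiation of the log-moment generating function (the paper dismisses this as ``by definition,'' but the very computation you give, including the reconciliation via the factorization $\cP_\zeta(x,x')=\cR_\zeta(x,x_u')Q_0(x,x_n')$ and $\sum_{x_n'}Q_0(x,x_n')=1$, appears verbatim in the paper's proof of Theorem~\ref{t:IDPODE}), and the second identity is obtained exactly as in the paper by differentiating \eqref{e:NNfixedPt} to get Poisson's equation \eqref{e:fisheta} and averaging against $\cpi_\zeta$ using invariance. No gaps; your write-up is in fact more careful than the paper's, which nominally takes the mean of \eqref{e:NNfixedPt} where it clearly means \eqref{e:fisheta}.
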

 
\begin{proof}
The first result holds by the definition of  $ \EFn{h}$  and $H^*_\zeta$.  To obtain the second identity,
we differentiate each side of \eqref{e:NNfixedPt} to obtain Poisson's equation \eqref{e:fisheta}.
On taking the mean of each side of  \eqref{e:NNfixedPt} with respect to $\cpi_\zeta$,  and using invariance $\cpi_\zeta \cP_\zeta =\cpi_\zeta$,  we obtain,
\[
  \cpi_\zeta( \util  ) +  \cpi_\zeta( H^*_\zeta ) = \cpi_\zeta(H^*_\zeta )+  \cpi_\zeta(\ddzeta \eta^*_\zeta ).
\]
\end{proof}
 
\begin{proof}[Proof of \Theorem{t:IPD}]
Part (i) is contained in \Prop{t:IPDfull}.

Part (ii):  Combining
\Lemma{t:Fderh}
and
\Lemma{t:Zh} 
we see that the conclusions of \Prop{t:looste68}
hold for each pair $(\zeta_0,h^*_{\zeta_0})$. This shows that $ h^*_\zeta$ is a continuously 
differentiable function of $\zeta$,  and hence $\cP_\zeta$ is also continuously differentiable.
To see that $\cpi_\zeta$ is
continuously 
differentiable, apply the representation in \Lemma{t:Zh}.
\end{proof}

\section{Optimal ODE solution}

We now prove \Theorem{t:IDPODE}.
 
The boundary condition is immediate from the assumptions:  $h_0^*$ is a constant, since $\cP_\zeta = P_0$.   Under the assumption that $h^*_\zeta(\xz)=0$ for each $\zeta$, it follows that $h_0^*(x) =0$ for each $x$.    It remains to show  that the relative value function solves the  ODE,
\[
  \ddzeta h_\zeta^* = \preclH(P_\zeta),
\]
with $\preclH$ defined in \eqref{e:fishP}.

On differentiating each side of  \eqref{e:NNfixedPt} we obtain,
\[
   \util(x)  +  \ddzeta \EFn{h^*_\zeta}(x)
		 = \ddzeta h_\zeta^*(x) + \ddzeta \eta_\zeta^*  
\]
Based on the definition \eqref{e:hmid}, 
\[
\EFn{h^*_\zeta}(x)=
\log\Bigl(   \sum_{x_u'} R_0(x,x_u') \exp(h^*_\zeta(x_u' \mid x))   \Bigr)  
\]
it follows that  $
\ddzeta \EFn{h^*_\zeta}(x)=$
\[
\begin{aligned} 
 \Bigl(   \sum_{x_u'} R_0(x,x_u') \exp(h^*_\zeta(x_u' \mid x) )   \Bigr)^{-1} 
 \sum_{x_u'} R_0(x,x_u') \exp(h^*_\zeta(x_u' \mid x)) \ddzeta h^*_\zeta(x_u' \mid x) 
\end{aligned}
\]
The equation simplifies as follows: \spm{this should be a general lemma}
\[
\begin{aligned}
\ddzeta \EFn{h^*_\zeta}(x) 
&=   \sum_{x_u'} \cR_\zeta (x,x_u') \ddzeta h^*_\zeta(x_u' \mid x) 
\\
&=   \sum_{x_u'} \cR_\zeta (x,x_u') \ddzeta \Bigl(\sum_{x_n'} Q_0(x,x_n') h^*_\zeta(x_u' , x_n') \Bigr) 
\\
  &= \sum_x \cP_\zeta(x,x') \ddzeta h^*_\zeta(x')
\end{aligned}
\]
Let $H=\ddzeta h_\zeta^*(x)$ and $\gamma =\ddzeta \eta_\zeta^*  $.  
 From the foregoing we obtain,
\[
\util(x) + \sum_{x'} \cP_\zeta(x,x') H(x')
=  H(x) + \gamma
\]
This is Poisson's equation, with $\gamma =\cpi_\zeta(\util)$.   
This shows that $h^*_\zeta$ is the solution to the ODE defined in \Theorem{t:IDPODE},  establishing (i) and (ii) of the theorem.

\bibliographystyle{siamplain}

\def\cprime{$'$}\def\cprime{$'$}

\end{document}